\DeclareMathAlphabet{\mathbbold}{U}{bbold}{m}{n}	
\newcommand{\R}{\mathbb{R}}
\newcommand{\Z}{\mathbb{Z}}
\newcommand{\N}{\mathbb{N}}
\renewcommand{\leq}{\leqslant}
\renewcommand{\geq}{\geqslant}
\newcommand{\eps}{\varepsilon}
\renewcommand{\epsilon}{\varepsilon}
\newcommand{\T}[1]{{1}^\mathrm{T}}
\newcommand*\diff{\mathop{}\!\mathrm{d}}
\newcommand{\cv}{\to}
\newcommand{\cvl}[1]{\underset{#1}{\longrightarrow}}
\newcommand{\ind}{\mathbbm{1}}
\DeclareMathOperator{\argmin}{argmin}
\newtheorem{theorem}{Theorem}[section]
\newtheorem{corollary}[theorem]{Corollary}
\newtheorem{lemma}[theorem]{Lemma}
\newtheorem{proposition}[theorem]{Proposition}
\theoremstyle{definition}
\newtheorem{remark}[theorem]{Remark}
\newcommand{\cld}{q}
\newcommand{\noy}{k}
\newcommand{\rmin}{r_{\min}}
\newcommand{\rmax}{r_{\max}}
\newcommand{\lmax}{\ell_{\max}}
\newcommand{\conc}{\kappa}
\DeclareMathOperator{\diag}{diag}
\newcommand{\dtc}{\psi}
\newcommand{\cont}{u}
\newcommand{\vit}{G}
\newcommand{\tmax}{t_{\max}}
\newcommand{\regis}{\textsuperscript{\mbox{\scriptsize{\textregistered}}} }
\newcommand{\ee}{\eta}
\newcommand{\rp}{\rho_{z}}
\newcommand{\rt}{\rho_{y}}
\renewcommand{\Im}{\text{Im }}
\newcommand{\fonction}[5]{
\begin{align*}
\displaystyle
\begin{array}{lrcl}
#1: & #2 & \longrightarrow & #3 \\
    & #4 & \longmapsto & #5
\end{array}
\end{align*}}
\newcommand{\etath}{\hat{\dtc}}
\newcommand{\opc}{\mathcal{K}}
\title{New inversion methods for the single/multi-shape CLD-to-PSD problem with spheroid particles}
\author[1]{Lucas Brivadis}
\author[1]{Ludovic Sacchelli}
\affil[1]{Univ. Lyon, Universit\'e Claude Bernard Lyon 1, CNRS, LAGEPP UMR 5007, 43 bd du 11 novembre 1918, F-69100 Villeurbanne, France (email: \texttt{lucas.brivadis@univ-lyon1.fr}, \texttt{ludovic.sacchelli@univ-lyon1.fr})}
\date{\today}
\begin{document}

\maketitle

\begin{abstract}
    In this paper, we express the Chord Length Distribution (CLD) measure associated to a given Particle Size Distribution (PSD) when particles are modeled as suspended spheroids in a reactor.
    Using this approach, we propose two methods to reconstruct the unknown PSD from its CLD.
    In the single-shape case where all spheroids have the same shape, a Tikhonov regularization procedure is implemented.
    In the multi-shape case, the measured CLD mixes the contribution of the PSD associated to each shape.
    Then, an evolution model for a batch crystallization process
    allows to introduce a Back and Forth Nudging (BFN) algorithm, based on dynamical observers.
    We prove the convergence of this method when crystals are split into two clusters: spheres and elongated spheroids.
    These methods are illustrated with numerical simulations.
\end{abstract}

\noindent {\bf Keywords:}
\begin{minipage}[t]{.8\linewidth}
\flushleft
Particle Size Distribution,
Chord Length Distribution,
spheroid particles,
Tikhonov regularization,
Back and Forth Nudging.
\end{minipage}


\section{Introduction}

Estimating the size distribution of suspended particles in a reactor is a major issue in process control \cite{kleinboistelle}.
In crystallization processes,
the particles shape and size govern important physico-chemical properties of the product, hence the need to be controlled and estimated.
Measuring a Particle Size Distribution (PSD) remains a challenging problem,
tackled by modern Process Analytical Technologies (PATs) with various measures and approaches, such as image processing \cite{Benoit, GAO}, dynamical observers based on solute concentration \cite{BRIVADIS202017} or moments based methods \cite{vissers2012model, porru2017monitoring, mesbah2011comparison, Uccheddu, lebaz, gruy:hal-01637703}.
In this paper, we focus on PATs giving access to Chord Length Distributions (CLDs), such as the Focused Beam Reflectance Measurement (FBRM) or the BlazeMetrics\regis technologies.
In the recent years, several strategies have been proposed to recover the PSD from the knowledge of its corresponding CLD \cite{worlitschek2005restoration, liu1998relationship, pandit2016chord, AGIMELEN2015629}.

Understanding the CLD-PSD relation is an essential step in recovering the desired PSD when using the above-mentioned technologies. Naturally, this relation is heavily influenced by the shape of the particles.
In \cite{LANGSTON200133, barrett, hobbel1991modern, brivadis:hal-01900402}, the authors considered spherical particles. It often occurs in crystallization processes that particles cannot be assumed to have such symmetries.
In \cite{AGIMELEN2015629} for instance, needle-shaped particles were modeled as cylinders.
In section~\ref{sec2}, we propose a new model for computing the CLD from a PSD of spheroid-like particles.
Spheroids are generalized sphere-like shapes that have the advantage of allowing to model both spheres and elongated particles with only one shape tuning parameter.
In that respect, we gather different shapes under the same mathematical umbrella while retaining many computational properties of the spherical model.
Note that, unlike \cite{LI20053251} who considered two-dimensional ellipses, we consider proper three-dimensional spheroids that can be measured by the probe in any possible orientation. Spheroids were already considered in \cite{kellerer1984chord}, but the experimental assumptions lead to differing probabilistic models and distributions.

In the single-shape case (section~\ref{sec3}) composed of spheroid-like particles,
we prove the PSD-to-CLD relation to be one-to-one.
Hence, recovering the PSD from the CLD is an inverse problem that can be solved by a direct method. However, the problem is ill-posed: small perturbations of the measured CLD may induce large variations of the reconstructed PSD. We apply a regularization procedure, known as Tikhonov regularization, in order to ensure robustness with respect to measurement noise of the CLD.

Due to polymorphism in crystallization processes, it frequently occurs that particles in the reactor have not only different sizes, but also different shapes.
These different shapes correspond to stable and/or metastable phases that appear, grow and may disappear during the process \cite{GAO, Mullin, Mersmann}.
In this context (see section~\ref{sec4}), estimating the PSD of each shape only from the knowledge of the shared CLD (the sum of the CLDs associated to each PSD) is a much more difficult issue, which, to the best of our knowledge, has not been investigated from a theoretical viewpoint.
In this paper we propose, in the multi-shape case, to make use not only of a measure of the CLD, but also of an evolution model of the PSD, to estimate the PSD with observer techniques.
Observers have proved to be very useful in the context of batch crystallization \cite{Uccheddu, nagy2013recent, motz2008state, mesbah2011comparison, vissers2012model, porru2017monitoring}. Here we apply the Back and Forth Nudging (BFN) algorithm
\cite{brivadis:hal-02529820, haine2011, haine2014recovering, haine2011fr, auroux2005back, auroux2008nudging, auroux2009back, auroux2012back, Ramdani, ito2011time}, which is an inverse problem technique based on dynamical observers.
We prove the convergence of this method when crystals are split into two clusters: spheres and elongated spheroids, which 
happens, for example, in \cite{GAO}.

For the two inversion procedures (Tikhonnov regularization and BFN algorithm), we provide a theoretical analysis and numerical simulations.

\paragraph{Notations.} In the paper, the notations $C^k, L^p, H^p$ denote functional spaces pertaining to properties of the distributions. $C^k(I, X)$ functions are $k$-continuously differentiable functions from $I$ to $X$. $L^p(I, X)$ functions are such that their $p$-power is integrable over $I$. $H^p(I, X)$ denotes the space of $p$-differentiable functions from $I$ to $X$ such that each of the derivatives of degree lesser or equal to $p$ is also $L^2(I,X)$.
We denote by $\mathbb{P}(A)$ the probability of event $A$, and $\mathbb{E}(R)$ the expectation of random variable $R$.
The partial derivative of a function $f$ with respect to the variable $x$ is denoted by $\frac{\partial f}{\partial x}$.

\section{From PSD to CLD for spheroids}\label{sec2}

\subsection{From spheroid geometry to chord length}\label{sec:ellips}

In this paper, we are concerned with particles whose shape can be approximated by a spheroid (also called ellipsoid of revolution).
A spheroid is a surface of revolution, obtained as the rotation of an ellipse along one of its two principal axes.
In particular, spheres are spheroids.
When scanning across some particles, the sensor measures chords on the projection of the particle on the plane that is orthogonal to the probe's laser beam.
Hence, two sources of hazards must be considered to model the random choice of the chords measured by the sensor:
\begin{itemize}
    \item choice of orientation of the spheroid with respect to the probe;
    \item choice of the chord on the projection of the spheroid with selected orientation.
\end{itemize}

\paragraph{Step 1: Choosing an orientation.}
A spheroid of radius $r$ in elementary orientation can be represented as the set of points $(x,y,z)\in \R^3$ such that
\begin{equation}\label{spheroid_elem}
\begin{pmatrix}
    x & y & z
\end{pmatrix}
D
\begin{pmatrix}
    x \\ y \\ z
\end{pmatrix}
\leq r^2
\quad 
\text{ with }
\quad
D=
\begin{pmatrix}
    1 & 0 & 0
    \\
    0 & 1 & 0
    \\
    0 & 0 & \frac{1}{\ee^2}
\end{pmatrix}.
\end{equation}
The parameter $\ee$ is the ratio of the diameter of the spheroid along the axis of rotation by the diameter perpendicular to this axis.  
It characterizes the eccentricity of the spheroid.
The spheroid is said to be prolate if $\ee>1$ and oblate if $\ee<1$. When $\ee=1$, the particle is a sphere.
The volume of such a particle is given by $\frac{4\pi}{3}\ee r^3$.

Without loss of generality, we assume that the probe's laser beam is parallel to the $z$-axis. The solid can be oriented in any direction in space.
Since the solid is a spheroid, it has an axis of symmetry and any orientation is equivalent to picking a point on the sphere in 3d space, corresponding, for instance, to the position of the north pole of the spheroid (see Figure \ref{fig:3d}).
For this reason, we obtain an orientation following spherical coordinates.
Hence a sequence of two rotations of the elementary spheroid \eqref{spheroid_elem} allows to choose any possible orientation.
\begin{itemize}
    \item First, we rotate the space around the $y$-axis with an angle $\theta\in [0,\pi]$, leading to a change of coordinates of the matrix
    $$
    \rt(\theta)=
    \begin{pmatrix}
        \cos \theta & 0& \sin \theta
        \\
        0 &  1 & 0
        \\
        -\sin \theta & 0 &\cos \theta
    \end{pmatrix}.
    $$
    \item Second, we rotate the space around the $z$-axis with an angle $\phi\in [0,2\pi]$, leading to a change of coordinates of the matrix 
    $$
    \rp(\phi)=
    \begin{pmatrix}
        \cos \phi & -\sin \phi & 0
        \\
        \sin \phi & \cos \phi & 0
        \\
        0 & 0 & 1
    \end{pmatrix}.
    $$
\end{itemize}
The change of coordinates $(x,y,z)^\top\mapsto \rp(\phi)\rt(\theta)(x,y,z)^\top$ has the effect of mapping the point $(0,0,1)$ to any point on the sphere. Furthermore, it is an isometry.
If  $(\phi,\theta)\in [0,2\pi]\times [0,\pi]$ is picked according to the probability measure $\diff \mu= \frac{\sin \theta}{4\pi} \diff \phi\diff \theta$, this equals to \emph{uniformly} picking a random orientation for the spheroid (that is, the measure $\mu$ gives a uniform probability of picking a point on the sphere).
Then, the change of coordinates implies that the rotated spheroid has  equation
\begin{equation}\label{spheroid_rot}
\begin{pmatrix}
    x & y & z
\end{pmatrix}
A
\begin{pmatrix}
    x \\ y \\ z
\end{pmatrix}
\leq r^2
\quad
\text{ with }
\quad
A = \rp(\phi)\rt(\theta)
\,D\,
\rt(-\theta)\rp(-\phi).
\end{equation}
In Figure~\ref{fig:3d}, the elementary spheroid \eqref{spheroid_elem} (on the left) is rotated by $\rp(\phi)\rt(\theta)$ to obtain the rotated spheroid \eqref{spheroid_rot} (on the right).

\begin{figure}[ht!]
    \centering
    \setlength{\unitlength}{.44\linewidth}
    \begin{picture}(1,1)
        \put(0,0){\includegraphics[width=\unitlength,trim=5cm 7.5cm 5.cm 6.5cm,clip]{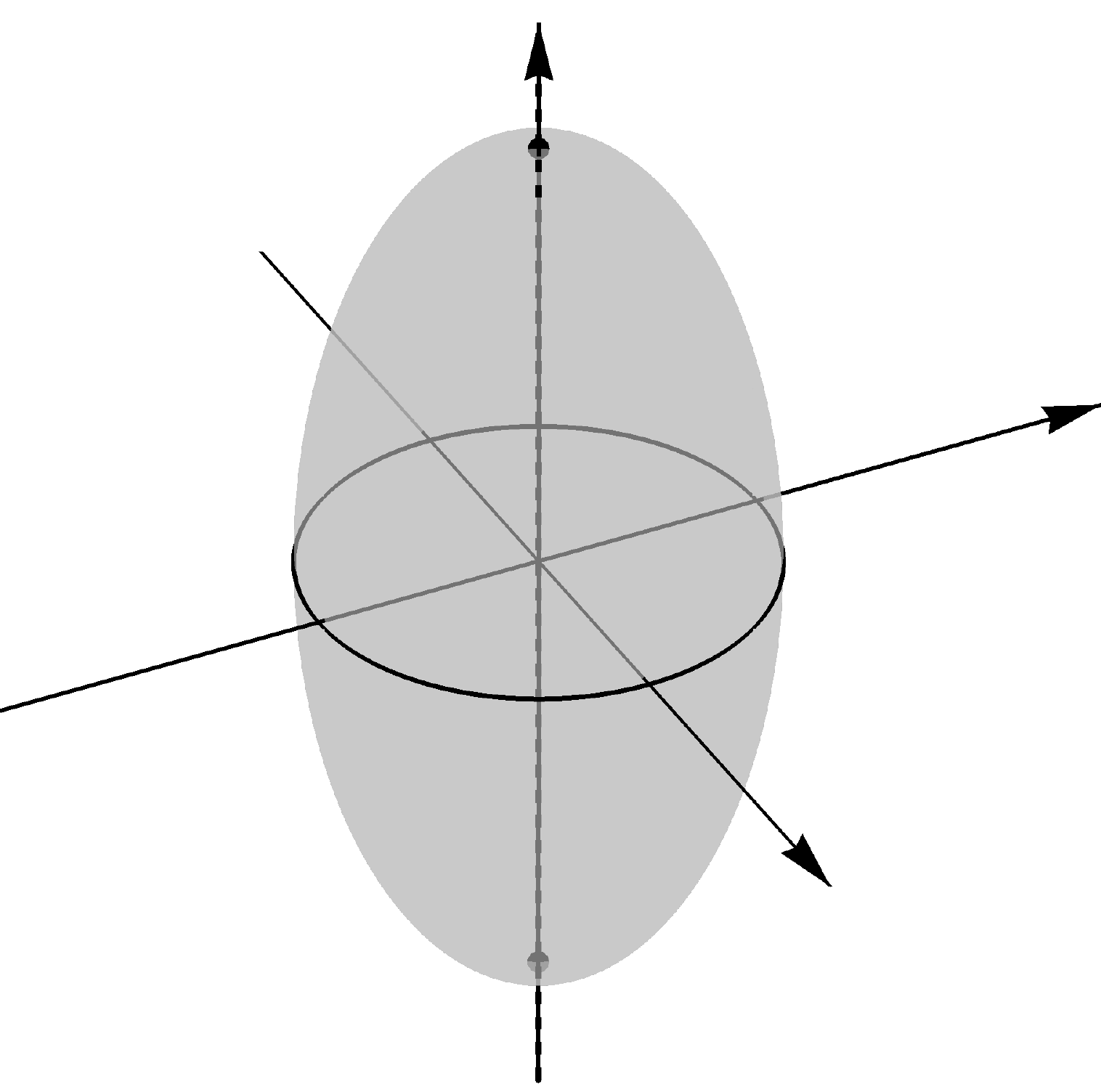}}
        \put(.77,.2){$x$}
        \put(.99,.58){$y$}
        \put(.52,.95){$z$}
        \put(.59,.39){$r$}
        \put(.5,.82){$\ee r$}
    \end{picture}
%
%
    \begin{picture}(1,1)
        \put(0,0){\includegraphics[width=\unitlength,trim=5cm 7.5cm 5.cm 6.5cm,clip]{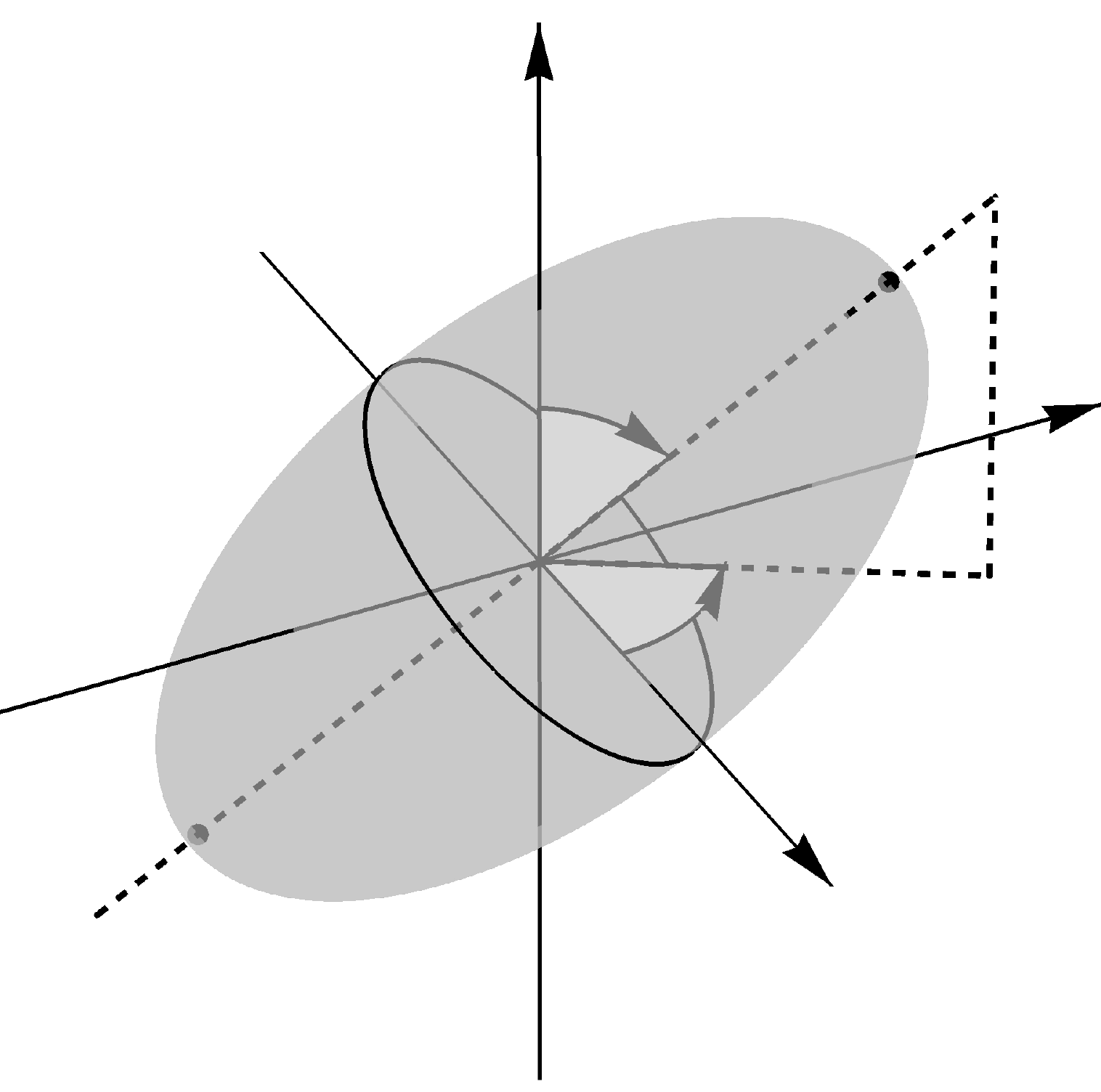}}
        \put(.77,.2){$x$}
        \put(.99,.58){$y$}
        \put(.52,.95){$z$}
        \put(.66,.42){$\phi$}
        \put(.59,.62){$\theta$}
    \end{picture}

    \caption{\emph{On the left:} elementary spheroid of parameter $\ee$ and radius $r$ (equation \eqref{spheroid_elem}).
    \emph{On the right:} rotation of the elementary spheroid with angles $\phi$, $\theta$ (equation \eqref{spheroid_rot})}
    \label{fig:3d}

\end{figure}

\paragraph{Step 2: Projecting the spheroid on the $(x,y)$-plane.} Given an arbitrary orientation of the particle in space, the sensor measure (assumed parallel to the $z$-axis) is the same as the one given by the ellipse obtained by projection of the solid on the $(x,y)$-plane. Hence, the next step is to transfer the geometry of the 3d spheroid onto its shadow in the $(x,y)$-plane.
The shell of the spheroid is given by $(x,y,z)A(x,y,z)^\top=r^2$ for some $(\phi,\theta)\in [0,2\pi]\times [0,\pi]$.
For completeness sake, the full expression of matrix $A$ is the following:
$$
\begin{pmatrix}
\bar s \cos ^2\phi  +\sin ^2\phi  
 & 
 -\bar\ee \sin ^2\theta  \sin2 \phi   
 &
-\bar \ee \sin 2\theta   \cos \phi 
 \\
 -\bar\ee \sin ^2\theta  \sin2 \phi
 & 
\bar s  \sin ^2\phi  +\cos ^2\phi  
 &
 -\bar\ee  \sin2 \theta   \sin \phi 
 \\
-\bar \ee \sin 2\theta   \cos \phi
 & 
 -\bar\ee  \sin2 \theta   \sin \phi 
 & 
\frac{1}{\ee^2}\cos^2\theta +\sin^2\theta    
\end{pmatrix}
\quad\text{ with }\quad
\begin{matrix}
\bar{\ee}=\frac{\ee^2-1}{2\ee^2},
\vspace{.2cm}
\\
\bar{s}=\frac{\sin^2\theta+\ee^2\cos^2\theta}{\ee^2}.
\end{matrix}
$$

If we are looking at points that appear at the edge of the shadow of the spheroid, it is clear that these must be such that the tangent plane to the spheroid at that point is vertical (see Figure~\ref{F:projection}).

\begin{figure}[ht!]
    \centering
        \setlength{\unitlength}{.5\linewidth}
    \begin{picture}(1.1,1.1)
        \put(0,0){\includegraphics[width=\unitlength,trim=3.5cm 2cm 3.5cm 6.5cm,clip]{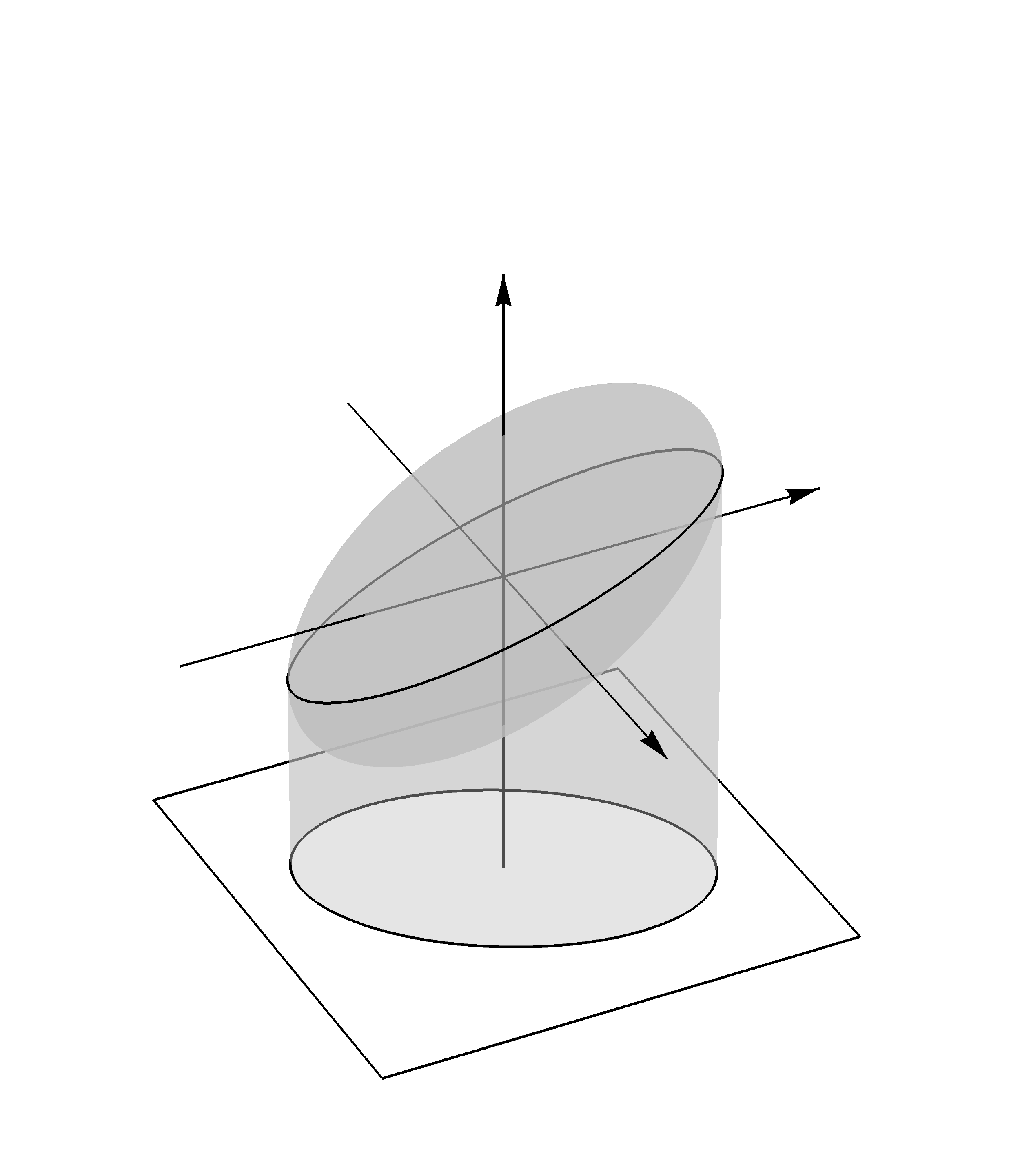}}
        \put(.66,.44){$x$}
        \put(.91,.78){$y$}
        \put(.52,1.1){$z$}
        \put(.65,.67){\rule[.5em]{5em}{.4pt} $\substack{\text{curve of vertical} \\ \text{tangent planes}}$}
        \put(.65,.368){\rule[.5em]{5em}{.4pt} $\substack{\text{elliptic cast} \\ \text{projection}}$}
    \end{picture}
    
    \caption{Projection of a spheroid on the $(x, y)$-plane.}
    \label{F:projection}
\end{figure}

Since the spheroid is given by an implicit definition of the form $g(x,y,z)=r^2$, the tangent plane to the spheroid at a point $(x,y,z)$  is actually the plane that is orthogonal to $\nabla g(x,y,z)$, the gradient of $g$ at $(x,y,z)$.
Hence, to find points $(x,y)$ in the plane that lie at the border of the shadow cast by the spheroid, we solve
$$
    g(x,y,z)=r^2,
    \qquad
    \left(\nabla g(x,y,z)\right)^\top \begin{pmatrix}
    0 \\ 0 \\ 1
\end{pmatrix}=0.
$$

In the case of a spheroid, $g(x,y,z)=(x,y,z)A(x,y,z)^\top$, hence $\nabla g(x,y,z)=A\cdot(x,y,z)^\top$.
In other words, we solve 
$$
    \begin{pmatrix}
    x & y & z
\end{pmatrix}
A
\begin{pmatrix}
    x \\ y \\ z
\end{pmatrix}
= r^2,\qquad
    \begin{pmatrix}
    0 & 0 & 1
\end{pmatrix}
A
\begin{pmatrix}
    x \\ y \\ z
\end{pmatrix}
= 0.
$$
In the $(x,y)$-plane, solutions to this pair of equations are points of the planar ellipse
\begin{equation}\label{E:ellipse}
    \alpha x^2+\beta y^2+\gamma xy=r^2,
\end{equation}
with
\begin{align}
\alpha&=\frac{\cos ^2\phi}{\cos^2\theta+\ee^2\sin^2 \theta}+\sin ^2\phi,\label{E:alpha}
\\
\beta&=\frac{\sin ^2\phi}{\cos^2\theta+\ee^2\sin^2 \theta}+\cos ^2\phi,
\\
\gamma&=-\frac{ (\ee^2-1) \sin ^2 \theta  \sin 2 \phi }{\cos^2\theta+\ee^2\sin^2 \theta }.
\end{align}
Naturally, $\cos^2\theta+\ee^2\sin^2 \theta >0$ for all $\ee>0$ and $\theta\in [0, \pi]$.
In conclusion, the shadow cast by the spheroid has the shape of an ellipse of orientation and eccentricity determined by the quantities $\alpha,\beta,\gamma$, themselves functions of $\phi,\theta$ and $\ee$.
When necessary, we write $\alpha_\ee$ to underline the $\ee$-dependence.

\paragraph{Step 3: Choosing a chord on the projection.}

Since we considered all the possible orientations of the spheroid in space, we can consider with no loss of generality that the probe's laser cut the two-dimensional projection \eqref{E:ellipse} at constant $y$.
Hence, the length of a chord on \eqref{E:ellipse} at some constant $y\in\R$ is the distance 
between the two $x$-solutions, if they exist, of 
\begin{equation}\label{E:trinome}
    \alpha x^2+\gamma y x+\beta y^2-r^2=0.
\end{equation}
Let $\Delta=\gamma^2 y^2-4 \alpha (\beta y^2-r^2)$ be the discriminant of the quadratic and let \begin{equation}\label{E:def_ymax}
    y_{\max} =\dfrac{2\sqrt{\alpha} r}{\sqrt{4\alpha \beta -\gamma^2}}.
\end{equation}
(Let us precise that $4\alpha \beta -\gamma^2=\frac{8 }{1+\ee^2+(\ee^2-1) \cos 2 \theta}>0$ for all $\ee>0$ and all $\theta\in[0, \pi]$.)
If $|y|\leq y_{\max}$, then $\Delta\geq0$ and the length of the chord cutting \eqref{E:ellipse} at $y$ is given by
\begin{equation}\label{E:ellDelta}
\ell = \frac{\sqrt{\Delta}}{\alpha}.    
\end{equation}
Otherwise, if $|y|> y_{\max}$, \emph{i.e.}, $\Delta<0$,
then no chord cuts the ellipse at $y$.
Hence, the maximum chord length is
$\frac{2r}{\sqrt{\alpha}}$, reached at $y = 0$.
For all $\ell\in[0, \frac{2r}{\sqrt{\alpha}}]$, let $y_\ell$ be such that the chord length $\ell$ is reached at $y=y_\ell$, so that $y_\ell$ is implicitly defined by \eqref{E:ellDelta}:
\begin{equation}\label{E:def_yell}
    y_{\ell} =\dfrac{\sqrt{4\alpha r^2-\alpha^2\ell^2} }{\sqrt{4\alpha \beta -\gamma^2}}.
\end{equation}
If $\ell>\frac{2r}{\sqrt{\alpha}}$, adopt the convention $y_\ell=0$. Doing so, $y_\ell$ is a continuous function of $\ell$.
These notations are summarized in Figure~\ref{F:cordes}.

To conclude, for a given spheroid of radius $r$ and ratio $\ee$ with orientation $(\phi, \theta)$ in space, the chord length $\ell$ is measured by the sensor when cutting the projection of the particle on the $(x, y)$-axis at constant $y = y_{\ell}$.

\begin{figure}[ht!]
    \centering
    \setlength{\unitlength}{.4\linewidth}
    \begin{picture}(1,1)
        \put(0,0){\includegraphics[width=\unitlength]{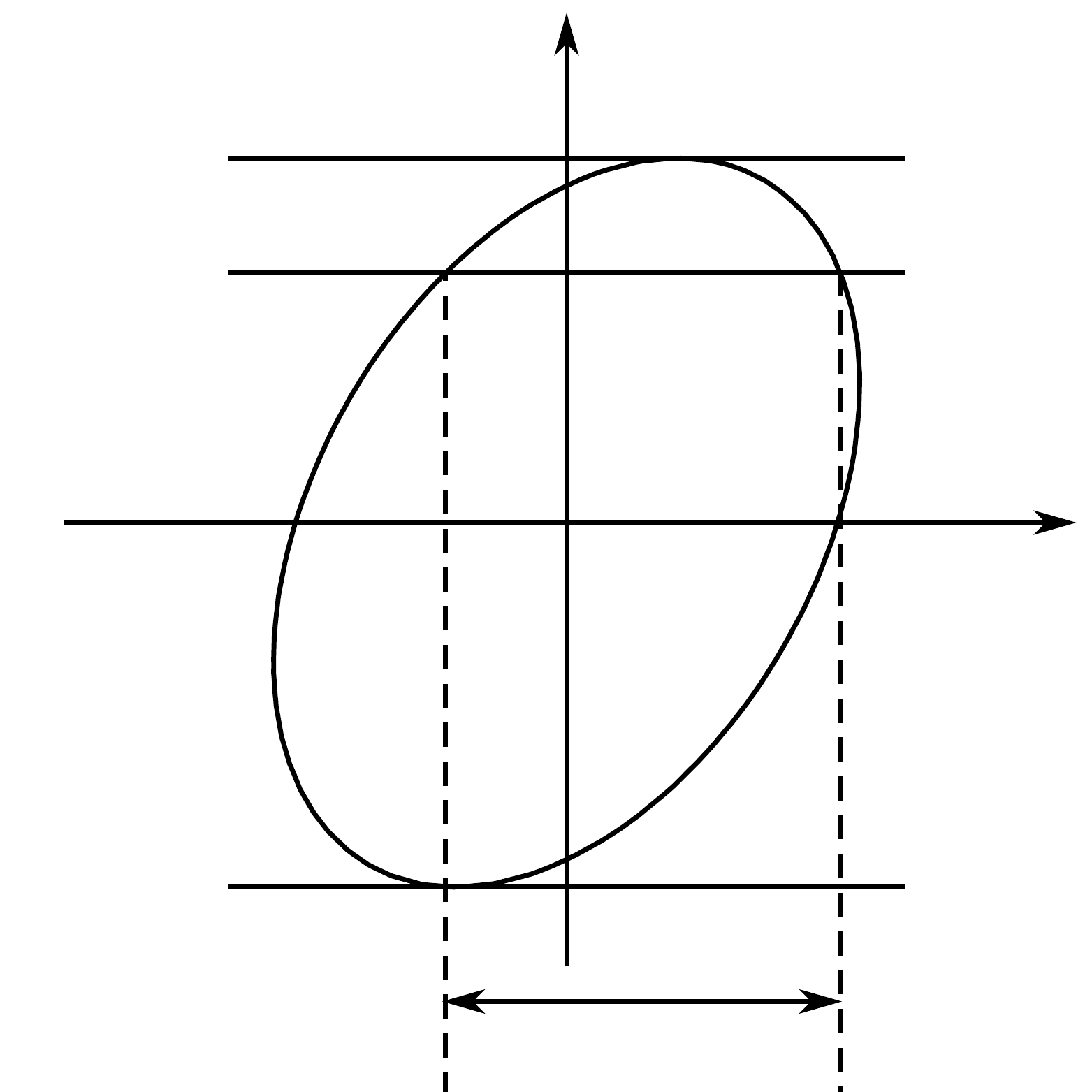}}
        \put(.55,.95){$y$}
        \put(.95,.45){$x$}
        \put(.57,0.03){$\ell$}
        \put(.08,0.85){$y_{\max}$}
        \put(.15,0.75){$y_\ell$}
        \put(.03,0.18){$-y_{\max}$}
    \end{picture}
    \caption{Length $\ell$ of an horizontal chord on an ellipse at $y = y_\ell\in[-y_{\max}, y_{\max}]$.}
    \label{F:cordes}
\end{figure}

\subsection{From spheroid distribution to cumulative CLD}

Let us denote by $\psi(r)$ (in $\text{m}^{-1}.\text{m}^{-3}$) a PSD of spheroids of parameter $\ee$ (dimensionless) and radius $r$ (in m) between $\rmin$ and $\rmax$, generating a CLD measured by the sensor in a batch reactor.
For $r_1<r_2$,
the integral
$\int_{r_1}^{r_2}\dtc(r)\diff r$
(in $\text{m}^{-3}$) represents the number of particles with radius $r$ between $r_1$ and $r_2$ per unit of volume.
The corresponding CLD is denoted by $\cld(\ell)$ (in $\text{m}^{-1}.\text{m}^{-3}$).
Note that the largest possible chord of a spheroid of radius $r$ is the diameter of the spheroid, namely, $\lmax =2\rmax\max(\ee, 1)$.
Hence $0\leq\ell\leq \lmax$. Then $\int_{\ell_1}^{\ell_2}\cld(\ell)\diff \ell$ represents the number of chords with length $\ell$ between $\ell_1$ and $\ell_2$ measured by the sensor per unit of volume.
The cumulative CLD is denoted by $Q(\ell) = \int_{0}^{\ell} q(l)\diff l$ (in m$^{-3}$).
Then, the renormalized functions
$\bar \psi(r) = \frac{1}{\int_{\rmin}^{\rmax} \psi(\rho)\diff \rho} \psi(r)$ and
$\bar q(\ell) = \frac{1}{Q(\lmax)} q(\ell)$ are probability density functions (in m$^{-1}$)
and $\bar Q(\ell) = \frac{1}{Q(\lmax)} Q(\ell)$ is a cumulative distribution function
(dimensionless).

Let $R$ be a random variable representing the radius of a particle,
and $L$ be a random variable representing a measured chord length.
By law of total expectation,
\begin{equation}\label{nbtoQb}
    \bar Q(\ell) := \int_0^\ell \bar q (l)\diff l
    =
    \mathbb{P}(L < \ell)
    = \mathbb{E}(\ind_{L\leq \ell})
    = \mathbb{E}(\mathbb{E}(\ind_{L\leq \ell} | R))
    = \int_{\rmin}^{\rmax} \noy(\ell, r)\bar \psi(r)\diff r.
\end{equation}
where
$$
\noy(\ell,r)=\mathbb{P}(L<\ell \mid R=r)
$$
encodes the probability of measuring a chord length less than $\ell$ assuming a particle of radius $r$ crosses the sensor.
Hence
\begin{equation}\label{ntoQ}
    Q(\ell) = \conc \int_{\rmin}^{\rmax} \noy(\ell, r) \psi(r)\diff r
\end{equation}
where
$$
\conc = \frac{Q(\lmax)}{{\int_{\rmin}^{\rmax} \psi(r)\diff r}}
$$
is the ratio between the number of particles and the number of chords measured  by the sensor, which depends on the experimental conditions.

For a given radius $r$, and a given orientation of the particle, encoded by $(\phi, \theta)$, the chord length is measured according to the situation described in the previous section. That is, the chord length corresponds to a chord length at constant $y$ for an ellipse in the $(x,y)$-plane (of shape determined by $r$, $\phi$, $\theta$ and $\ee$).
Then, $L<\ell$ is achieved if
the horizontal chord has ordinate
$y$ belonging to the set 
\begin{equation}\label{E:set}
(-y_{\max},-y_{\ell})\cup (y_{\ell},y_{\max})
\end{equation}
where $y_{\max}$ is as in \eqref{E:def_ymax} and $y_\ell$ as in \eqref{E:def_yell}.
Since $\ell<\frac{2r}{\sqrt{\alpha}}$ with $\alpha$ as in \eqref{E:alpha}, the probability that $L<\frac{2r}{\sqrt{\alpha}}$ is full. Hence the probability that the measured chord length $L$ is less than $\ell$ is given by
$$
\frac{2(y_{\max}-y_{\ell})}{2y_{\max}}=1- \sqrt{1-\left(\dfrac{\ell}{2r}\right)^2\alpha},
$$
which means that the ordinate of the chord length is chosen uniformly in the set \eqref{E:set}.

Uniformly choosing an orientation of the spheroid
means that the angles $(\phi,\theta)$
are picked in $[0,2\pi]\times [0,\pi]$
according to the probability measure $\diff \mu= \frac{\sin \theta}{4\pi} \diff \phi\diff \theta$.
Then, by the law of total expectation,
\begin{equation}\label{eq:ker}
\noy(\ell,r)
=
1-
\int_{\phi=0}^{2\pi}\int_{\theta=0}^\pi
\sqrt{1-\left(\dfrac{\ell}{2r}\right)^2\alpha_\ee(\phi,\theta)}\frac{\sin \theta}{4\pi} \diff \theta\diff \phi,
\end{equation}
with
\begin{equation}
\alpha_\ee(\phi,\theta)=\frac{\cos ^2\phi}{\cos^2\theta+\ee^2\sin^2 \theta }+\sin ^2\phi.
\end{equation}
Combining the expression of $\noy$ with \eqref{ntoQ}, we get a function that maps a PSD of spheroids to the corresponding cumulative CLD up to the constant $\conc$.
In particular, if $\bar\psi$ is a Dirac distribution at some fixed radius $r$ (which means that all particles have the same radius $r$), then \eqref{nbtoQb} yields $\bar Q(\ell) = \noy(\ell, r)$.
In Figure~\ref{fig:noy}, we plot $\bar Q(\ell)$ for a Dirac distribution of particles at $r = 1$mm, and three different values of $\ee$. This emphasizes the influence of the shape parameter on the CLD.
\begin{figure}[ht!]
    \centering
    \includegraphics[width=.5\linewidth]{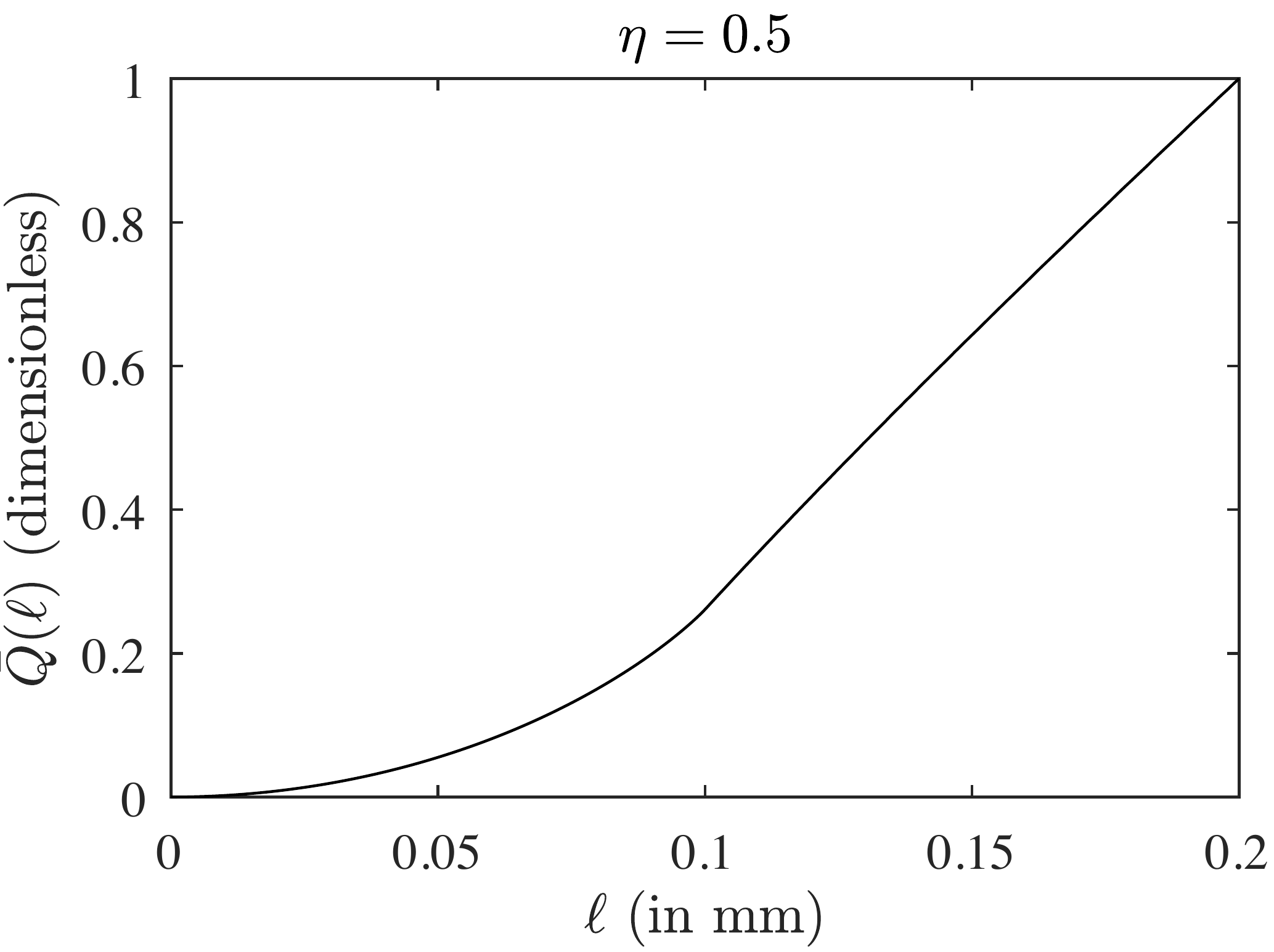}

    \includegraphics[width=.5\linewidth]{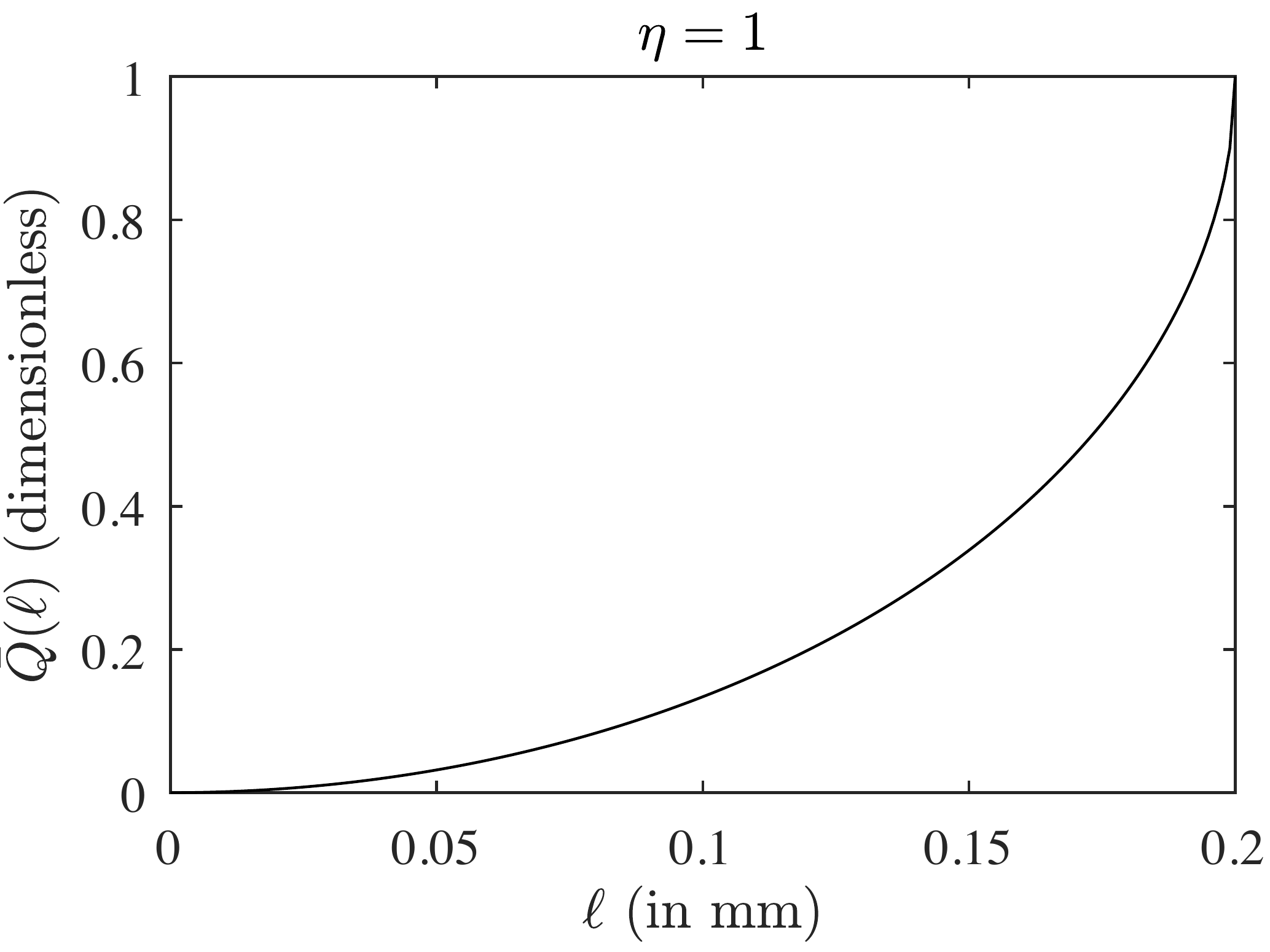}

    \includegraphics[width=.5\linewidth]{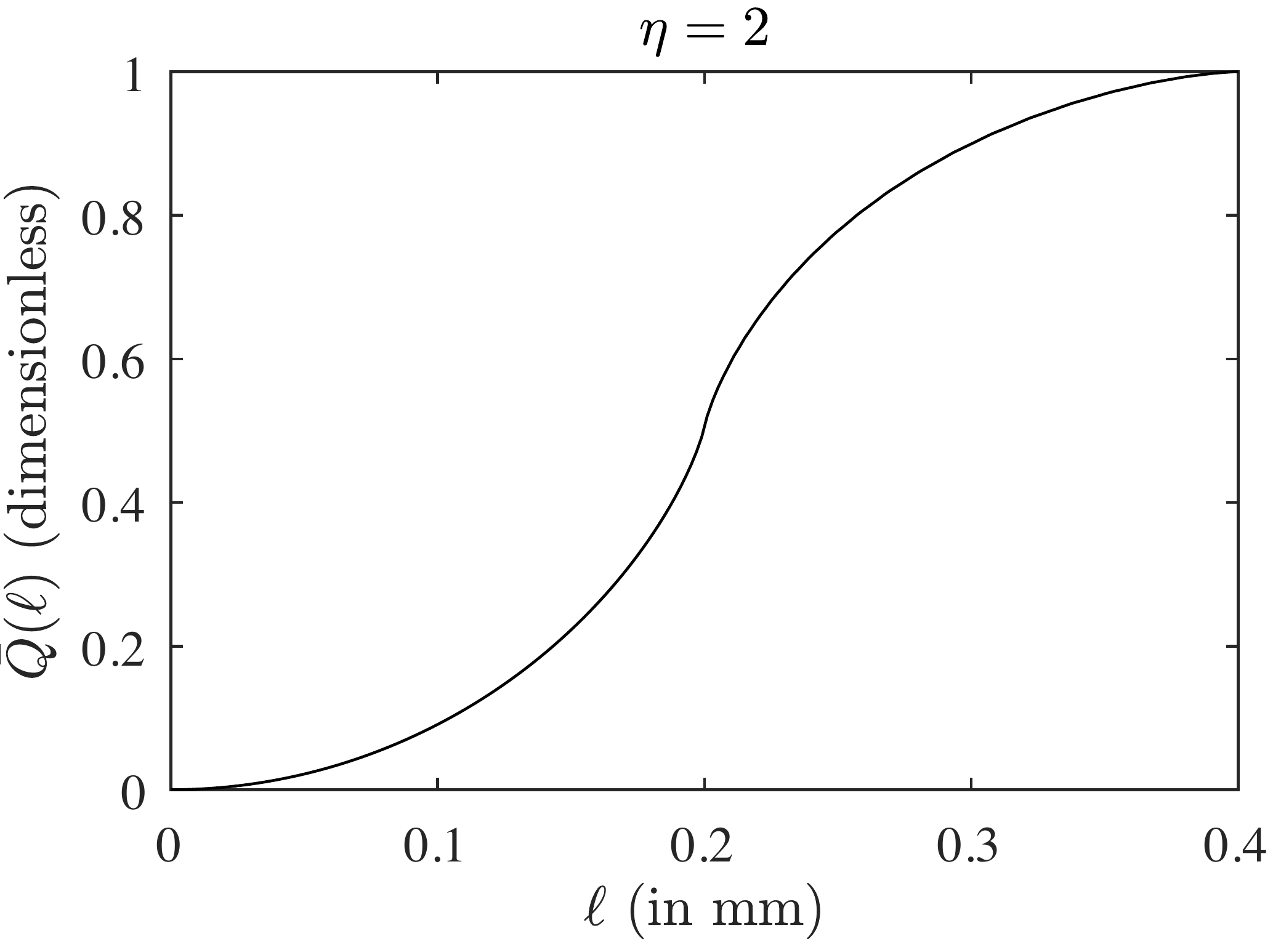}

    \caption{
    Normalized CLD $\bar Q$ associated to a Dirac distribution of spheroids at $r = 1$mm for $\ee=0.5,\, 1,\, 2$.
    }
    \label{fig:noy}
\end{figure}

In the two following sections, we consider the problem of reconstruction of the PSD from the CLD in two different cases of study.

\section{A regularization method for the single-shape case CLD-to-PSD problem}\label{sec3}

Consider a PSD of spheroids sharing the same shape parameter $\ee$.
According to \eqref{ntoQ}, it is possible to compute the corresponding cumulative CLD up to the knowledge of the parameter $\conc$. Conversely, for a given CLD, is it possible to estimate the corresponding PSD?
This question is a crucial issue in process control. Indeed, Process Analytical Technologies (PATs) like the FBRM sensor are able to measure the CLD online, for example during a crystallization process. But the main distribution to be known, and governing the physico-chemical properties of solids, is the PSD.
In this section, we propose a two-steps procedure to recover the PSD from two measures: the CLD, and the solid concentration in the reactor.
\begin{enumerate}
    \item First, using the knowledge of the CLD and \eqref{nbtoQb}, we estimate the renormalized PSD $\bar \psi$.
    \item Second, using the CLD and the solid concentration, we estimate the number of particles per unit of volume $\int_{\rmin}^{\rmax}\psi(r)\diff r$.
\end{enumerate}
Combining these steps with the relation
\begin{equation}
    \psi(r) = \bar \psi(r) \int_{\rmin}^{\rmax}\psi(\rho)\diff \rho, \qquad \forall r\in[\rmin, \rmax],
\end{equation}
we obtain an estimation of the PSD $\psi$.
Sometimes, the knowledge of the number of particles is not to be determined: only the ``shape'' of the PSD is of interest. In this case, only the first step needs to be applied. In numerical simulations, we focus on this first step.

\subsection{Estimation of \texorpdfstring{$\bar \psi$}{p} with a Tikhonov regularization procedure}

Let $X = L^2((\rmin, \rmax); \R)$ be the set of real square integrable functions over $(\rmin, \rmax)$,
and $Y = L^2((0, \lmax); \R)$ with $\lmax = 2\rmax\max(\ee, 1)$.
Then a (renormalized) PSD may be viewed as an element of $X$, while a (renormalized) PSD is an element of $Y$.
Let us define the following map:
\fonction{\mathcal{K}}{X}{Y}{\bar\dtc}{\left(\ell\mapsto
\int_{\rmin}^{\rmax}\noy(\ell, r)\bar\dtc(r)\diff r
\right)}
Equation~\eqref{nbtoQb} may be rewritten as
\begin{equation}\label{pbinv1}
\mathcal{K} \bar \psi = \bar Q.
\end{equation}
For a given CLD $q$, it is easy to compute the cumulative renormalized CLD $\bar Q$.
Then, reconstructing $ \bar \psi$ from $\bar Q$ is solving the inverse problem \eqref{pbinv1} with unknown $ \bar \psi$ in $L^2((\rmin, \rmax); \R)$.
However, this problem admits a solution only if $\bar Q$ lies in the image of $\mathcal{K}$, denoted by $\Im \mathcal{K} = \{\mathcal{K} \bar \psi, \bar \psi \in X\}$.
Due to measurements noise on $\bar Q$, this condition is generally not satisfied. To overcome this problem, we reformulate \eqref{pbinv1} as a minimization problem:
\begin{equation}\label{pbinv2}
\textit{Find } \bar \psi\in X \textit{ minimizing } \|\mathcal{K}  \bar \psi - \bar Q\|^2.
\end{equation}
where $\|\cdot\|$ denotes the $L^2$-norm, that is,
\begin{equation}
    \|\mathcal{K}  \bar \psi - \bar Q\|^2
    = \int_0^{\lmax} |(\mathcal{K}  \bar \psi)(\ell) - \bar Q(\ell)|^2\diff \ell
\end{equation}
Denoting by $\argmin_{ \bar \psi\in X} \|\mathcal{K}  \bar \psi - \bar Q\|^2$ the set of solutions of \eqref{pbinv2}, the following facts hold (see, \emph{e.g.}, \cite{Tikhonov}):
\begin{itemize}
    \item If $\mathcal{K}$ is injective, then \eqref{pbinv2} has at most one solution.
    \item If $\bar Q \in \Im\mathcal{K} \oplus \left(\Im\mathcal{K}\right)^\perp$, then the set $\argmin_{ \bar \psi \in X}\|\mathcal{K}  \bar \psi - \bar Q\|^2$ is closed, convex and non-empty (in particular \eqref{pbinv2} admits at least one solution).
    \item If $\mathcal{K}$ is injective and admits a left inverse denoted by $\mathcal{K}^{-1}$, then the unique solution of \eqref{pbinv2} is $ \bar \psi=\mathcal{K}^{-1}\bar Q$.
\end{itemize}

The space $\Im\mathcal{K} \oplus \left(\Im\mathcal{K}\right)^\perp$ being dense in $Y$, we assume in the following that $\bar Q$ lies in this set.
In section~\ref{sec:inj}, we prove the following proposition.
\begin{restatable}{theorem}{propinj}
The operator $\mathcal K$ is injective.
\end{restatable}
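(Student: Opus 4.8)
The plan is to exploit the homogeneity of the kernel. Formula~\eqref{eq:ker} shows that $\noy(\ell,r)$ depends on $\ell$ and $r$ only through the ratio $\ell/r$, so I write $\noy(\ell,r)=h(\ell/r)$. The function $(\phi,\theta)\mapsto\alpha_\ee(\phi,\theta)$ is continuous, hence bounded on the compact set $[0,2\pi]\times[0,\pi]$ by some $A_\ee<\infty$, so on the interval $s\in[0,s^\ast)$ with $s^\ast:=2/\sqrt{A_\ee}>0$ the radicand $1-\tfrac{s^2}{4}\alpha_\ee(\phi,\theta)$ is positive for every orientation. There, expanding the square root in \eqref{eq:ker} by the binomial series and integrating term by term in $(\phi,\theta)$ (justified by the uniform bound $\alpha_\ee\le A_\ee$ and $\tfrac{s^2}{4}A_\ee<1$ on compact subsets of $[0,s^\ast)$) yields the convergent power series
\[
h(s)=\sum_{n\ge1}a_n\,s^{2n},\qquad
a_n=-\binom{1/2}{n}\Bigl(-\tfrac14\Bigr)^{n}\int_0^{2\pi}\!\!\int_0^{\pi}\alpha_\ee(\phi,\theta)^n\,\frac{\sin\theta}{4\pi}\,\diff\theta\,\diff\phi ;
\]
the constant term cancels because $\int_0^{2\pi}\!\int_0^{\pi}\tfrac{\sin\theta}{4\pi}\diff\theta\diff\phi=1$, and since $\alpha_\ee>0$ and $\binom{1/2}{n}$ has sign $(-1)^{n+1}$ for $n\ge1$, every coefficient is \emph{strictly positive}: $a_n>0$. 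Only the germ of $h$ at $0$ will be used, so no discussion of $h$ at large arguments is needed.

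Next, I assume $\mathcal K\bar\psi=0$ for some $\bar\psi\in X$ and aim at $\bar\psi=0$. Since the kernel is bounded, $(\mathcal K\bar\psi)(\ell)=\int_{\rmin}^{\rmax}h(\ell/r)\bar\psi(r)\diff r$ has a continuous representative, which therefore vanishes at every $\ell$. Fixing $\ell\in[0,\ell^\ast)$ with $\ell^\ast:=\rmin\, s^\ast$, one has $\ell/r\in[0,s^\ast)$ for all $r\in[\rmin,\rmax]$, so I may insert the power series of $h$ and interchange sum and integral (dominated convergence: for $\ell\le\ell_1<\ell^\ast$ the partial sums are dominated by $\bigl(\sum_n a_n(\ell_1/\rmin)^{2n}\bigr)\,|\bar\psi|\in L^1(\rmin,\rmax)$). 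This gives
\[
0=(\mathcal K\bar\psi)(\ell)=\sum_{n\ge1}a_n\,\ell^{2n}\int_{\rmin}^{\rmax}r^{-2n}\,\bar\psi(r)\,\diff r ,\qquad \ell\in[0,\ell^\ast).
\]
A power series that vanishes on an interval has all coefficients equal to zero, and $a_n>0$, so $\int_{\rmin}^{\rmax}r^{-2n}\bar\psi(r)\,\diff r=0$ for every $n\ge1$.

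Finally I turn these identities into an orthogonality statement. Under the change of variable $t=r^{-2}$, with $I:=[\rmax^{-2},\rmin^{-2}]$ (a compact interval, bounded away from $0$) and $g(t):=\tfrac12\,t^{-3/2}\bar\psi(t^{-1/2})$, one has $g\in L^2(I)$ and $\int_I t^{n}g(t)\,\diff t=0$ for all $n\ge1$; equivalently, $t\mapsto t\,g(t)\in L^2(I)$ is orthogonal to every polynomial. Polynomials being dense in $L^2(I)$, this forces $t\,g\equiv0$, hence $g\equiv0$, hence $\bar\psi=0$ in $X$, which proves injectivity of $\mathcal K$.

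The computations involved (binomial expansion, the two dominated-convergence interchanges, the substitution) are routine; the single delicate point is that the Taylor argument only produces the moments $\int_{\rmin}^{\rmax}r^{-2n}\bar\psi=0$ for $n\ge1$ — the $n=0$ moment never appears — so the concluding step must be phrased in terms of $t\,g$ rather than $g$, the extra factor $t$ precisely compensating for the absent moment. An alternative, essentially equivalent route is to recognize $\mathcal K$ (after differentiating $\bar Q$) as a Mellin convolution and invoke injectivity of the Mellin transform on compactly supported data.
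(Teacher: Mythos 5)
Your proof is correct and follows essentially the same strategy as the paper's: both extract the moment identities $\int_{\rmin}^{\rmax} r^{-2n}\bar\psi(r)\,\diff r=0$ for all $n\geq 1$ from the behaviour of $\mathcal K\bar\psi$ near $\ell=0$ (the paper by computing the even derivatives $(\mathcal K\bar\psi)^{(2n)}(0)$ under the integral sign, you by expanding the kernel as a convergent power series in $\ell/r$ and identifying coefficients --- the same computation packaged differently, with the same strictly positive coefficients), and both then conclude from the totality of the family $\left(r\mapsto r^{-2n}\right)_{n\geq 1}$ in $L^2((\rmin,\rmax);\R)$. The one genuine difference lies in that last step: the paper cites the totality from an external reference, whereas you prove it on the spot via the substitution $t=r^{-2}$ and Weierstrass density of polynomials on a compact interval bounded away from $0$, correctly handling the absent $n=0$ moment by testing against $t\,g$ rather than $g$. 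This makes your argument self-contained, which is a modest but real improvement; otherwise the two proofs coincide.
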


Therefore, the problem \eqref{pbinv2} admits exactly one solution.
However, numerically computing this solution remains challenging, because the problem is still ill-posed.
Indeed, the operator $\mathcal{K}$ is compact, as an integral operator with square-integrable kernel.
Hence, its left-inverse can not be continuous, which implies that any small measurement noise on $\bar Q$ leads to a major perturbation of the estimated renormalized PSD $ \bar \psi$.
To tackle this issue, a typical approach is the Tikhonov regularization procedure.

\begin{proposition}[see, \emph{e.g.}, \cite{Tikhonov}]
For any $\delta>0$,
the minimization problem 
\begin{equation}\label{pbinv3}
\text{Find } \bar \psi\in X \text{ minimizing } \|\mathcal{K}  \bar \psi - \bar Q\|^2 + \delta \| \bar \psi\|^2.
\end{equation}
admits a unique solution, which depends continuously on $\bar Q$.
\end{proposition}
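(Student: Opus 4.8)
The plan is to read \eqref{pbinv3} as the minimization over the Hilbert space $X$ of the Tikhonov functional $J_\delta(\bar\psi) := \|\mathcal{K}\bar\psi - \bar Q\|^2 + \delta\|\bar\psi\|^2$, and to exploit that for $\delta>0$ this functional is continuous, strictly convex and coercive. First I would expand $J_\delta$ and note that its quadratic part satisfies $\|\mathcal{K}\bar\psi\|^2 + \delta\|\bar\psi\|^2 \ge \delta\|\bar\psi\|^2$, which yields both coercivity ($J_\delta(\bar\psi)\to +\infty$ as $\|\bar\psi\|\to+\infty$) and strict convexity. Existence of a minimizer then follows from the direct method of the calculus of variations: a minimizing sequence is bounded by coercivity, hence has a weakly convergent subsequence, and the weak limit is a minimizer by weak lower semicontinuity of the convex continuous functional $J_\delta$. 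Uniqueness follows from strict convexity.

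The route I would actually favour, however, passes through the first-order optimality condition, because it simultaneously produces the continuous dependence. Since $J_\delta$ is Fr\'echet differentiable with $\tfrac{1}{2}\nabla J_\delta(\bar\psi) = (\mathcal{K}^*\mathcal{K} + \delta \Id)\bar\psi - \mathcal{K}^*\bar Q$, a point $\bar\psi\in X$ minimizes $J_\delta$ if and only if it solves the normal equation $(\mathcal{K}^*\mathcal{K}+\delta \Id)\bar\psi = \mathcal{K}^*\bar Q$ (``only if'' by differentiability, ``if'' by convexity). The operator $T_\delta := \mathcal{K}^*\mathcal{K}+\delta \Id$ is bounded and self-adjoint, and $\langle T_\delta\bar\psi, \bar\psi\rangle \ge \delta\|\bar\psi\|^2$; by Lax--Milgram (or the spectral theorem) it is boundedly invertible with $\|T_\delta^{-1}\| \le 1/\delta$. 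Hence \eqref{pbinv3} has the unique solution $\bar\psi = T_\delta^{-1}\mathcal{K}^*\bar Q$, and the map $\bar Q\mapsto T_\delta^{-1}\mathcal{K}^*\bar Q$ is linear and bounded from $Y$ to $X$ with norm at most $\|\mathcal{K}\|/\delta$, hence Lipschitz continuous, which gives the continuous dependence on $\bar Q$.

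I do not expect a genuine obstacle here, as this is the textbook Hilbert-space theory of Tikhonov regularization (for which I would cite \cite{Tikhonov}); the only points deserving a word of care are that the coercivity constant is exactly $\delta$, \emph{independently} of $\mathcal{K}$ — which is precisely what makes the argument work even though $\mathcal{K}$ is compact and not boundedly invertible — and that $\mathcal{K}$ is a bounded operator, which was already observed in the excerpt since $\mathcal{K}$ is an integral operator with square-integrable kernel $\noy$ (hence Hilbert--Schmidt). I would also record the explicit solution formula $\bar\psi = (\mathcal{K}^*\mathcal{K}+\delta\Id)^{-1}\mathcal{K}^*\bar Q$, since it is what underlies the numerical implementation.
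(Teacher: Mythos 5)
Your proof is correct; this is the standard Hilbert-space argument (strict convexity and coercivity with constant $\delta$, or equivalently the normal equation $(\mathcal{K}^*\mathcal{K}+\delta \Id)\bar\psi=\mathcal{K}^*\bar Q$ with $\|(\mathcal{K}^*\mathcal{K}+\delta \Id)^{-1}\|\le 1/\delta$), and it matches what the paper intends: the paper gives no proof of its own, simply deferring to the cited reference \cite{Tikhonov}, whose argument is exactly the one you reproduce.
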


The Tikhonov regularization consists in replacing the ill-posed problem \eqref{pbinv2} by the well-posed \eqref{pbinv3}.
The parameter $\delta$ is called the \emph{regularization parameter}.
Letting $\delta$ tend towards zero, we recover the original problem \eqref{pbinv2}. As $\delta$ tends towards infinity, the solution of \eqref{pbinv3} tends towards zero.
The choice of $\delta$ is a trade-off: the regularized problem must be sufficiently close to the original problem ($\delta$ sufficiently small) to have a similar solution, but not too close to remain robust to measurement noise ($\delta$ sufficiently large). It must be experimentally selected.
One can interpret $\delta$ as a  confidence measure: the more uncertain the sensor is, the larger $\delta$ should be.
Finally, since $ \bar \psi$ is known to be a probability density function, one can constrain the minimization problem:
\begin{equation}\label{pbinv4}
\textit{Find } \bar \psi\in X \textit{ minimizing } \|\mathcal{K}  \bar \psi - \bar Q\|^2 + \delta \| \bar \psi\|^2
\textit{ subject to }  \bar \psi \geq 0.
\end{equation}
Denoting by $ \bar \psi$ the solution of this latter problem, we now aim to find the PSD $\psi$.

\subsection{Estimation of the number of particles with solid concentration}

In this section, we propose to estimate $\int_{\rmin}^{\rmax}\psi(r)\diff r$
by using a measurement of the solid concentration $C_s$ (in $\text{kg}$ of solid per $\text{kg}$ of solvent).
Let $\rho_s$ (in $\text{kg}.\text{m}^{-3}$) be the density of the solute in solid phase, $M_e$ be the solvent mass (in $\text{kg}$), and $V_s$ (in $\text{m}^{3}$) be the volume occupied by the particles in the reactor. Then
$C_s = \frac{\rho_s}{M_e}V_s$ and
\begin{equation}
    V_s = \frac{4\pi}{3}\ee\int_{\rmin}^{\rmax}\psi(r)r^3\diff r
    = \frac{4\pi}{3}\ee\int_{\rmin}^{\rmax}\bar\psi(r)r^3\diff r
    \int_{\rmin}^{\rmax}\psi(r)\diff r.
\end{equation}
Using the estimation of $\bar\psi$ obtained in the previous step,
we get
\begin{equation}
    \int_{\rmin}^{\rmax}\psi(r)\diff r
    =\frac{3}{4\pi\ee} \frac{M_e}{\rho_s}\frac{C_s}{\int_{\rmin}^{\rmax}\bar\psi(r)r^3\diff r}.
\end{equation}
Thus, if $\rho_s$, $M_e$ and $\ee$ are known, and $\bar\psi$ is estimated in the previous step, it is possible to estimate the number of particles per unit of volume with a measurement of the solid concentration.

\subsection{Numerical simulations}

For simulations, we consider a bi-modal normalized PSD $\bar\psi(r)$ of spheroid particles with shape parameter $\ee = 2$ and radius $r$ between $\rmin = 1.0\times 10^{-4}$m and $\rmax = 3.0\times 10^{-4}$m,
attaining its maximum at $r = 1.5\times 10^{-4}$m and $r = 2.5\times 10^{-4}$m.
More precisely, we choose
\begin{equation}\label{eq:psdexp}
\bar\psi(r) = \frac{e^{-30(r-1.5\times 10^{-4})^2}+e^{-30(r-2.5\times 10^{-4})^2}}{\int_{1\times 10^{-4}}^{3\times 10^{-4}} e^{-30(\rho-1.5\times 10^{-4})^2}+e^{-30(\rho-2.5\times 10^{-4})^2} \diff \rho}.
\end{equation}

The corresponding CLD $\bar q$ satisfies $\bar Q = \opc \bar\psi$, where $\bar Q$ is the cumulative CLD. The chord lengths $\ell$ lie in $[0, \lmax]$, with $\lmax = 2\rmax\ee = 12$mm.
We add a zero mean Gaussian noise to $q$ with variance deviation of $2\%$ of the maximum of $q$.
Then, we apply the Tikhonov regularization procedure to estimate $\psi$ from the noised CLD $q$.
Intervals $[\rmin, \rmax]$ and $[0, \lmax]$ are discretized with $200$ equally spaced points.
We use three different values of the regularization parameter $\delta(=10^{-5},\, 10^{-3},\,10^{-1})$.
We plot the results in Figure~\ref{fig:tikho}.
\begin{figure}[ht!]
    \centering
    \includegraphics[width=.5\linewidth]{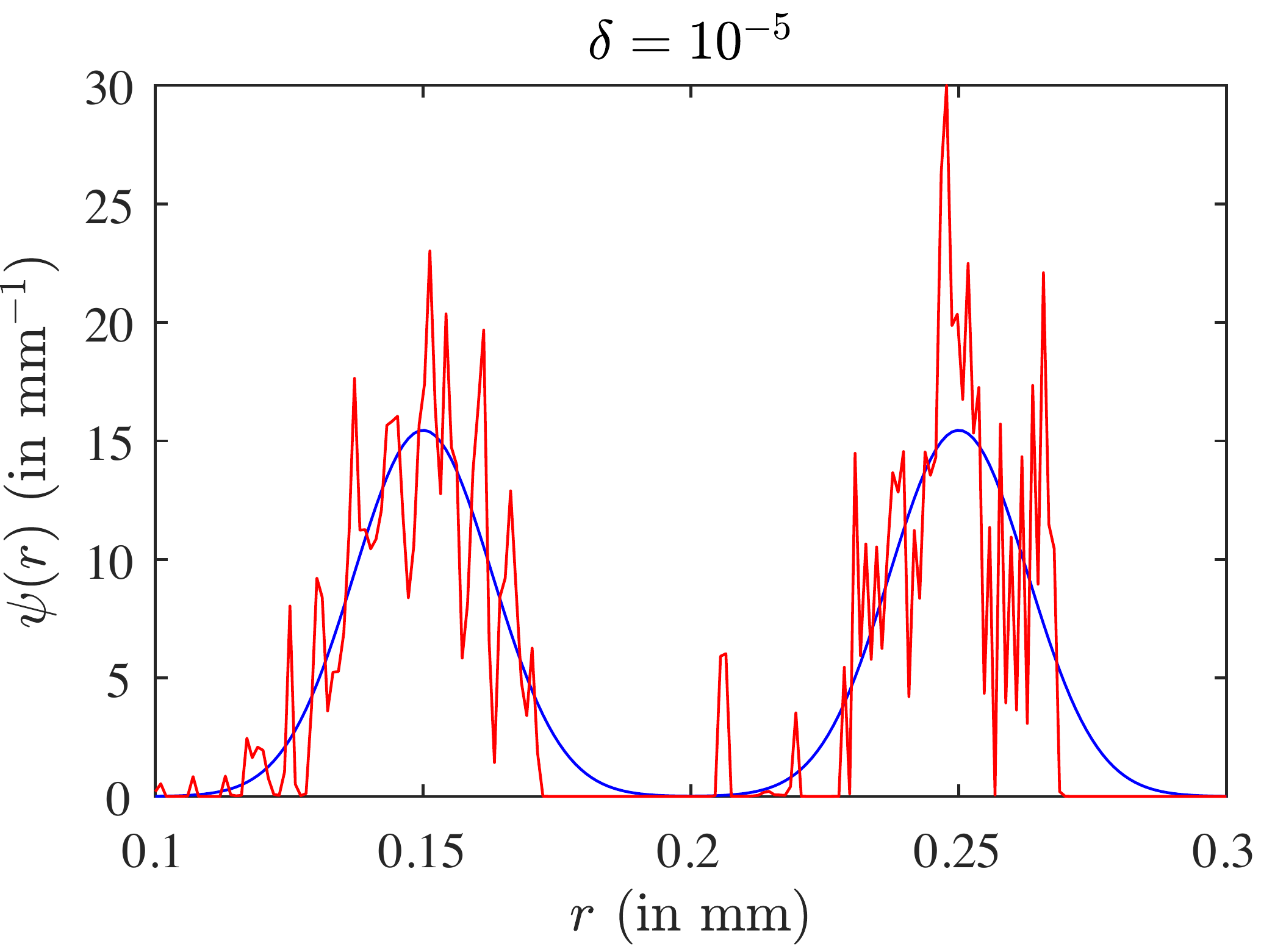}\\
    \includegraphics[width=.5\linewidth]{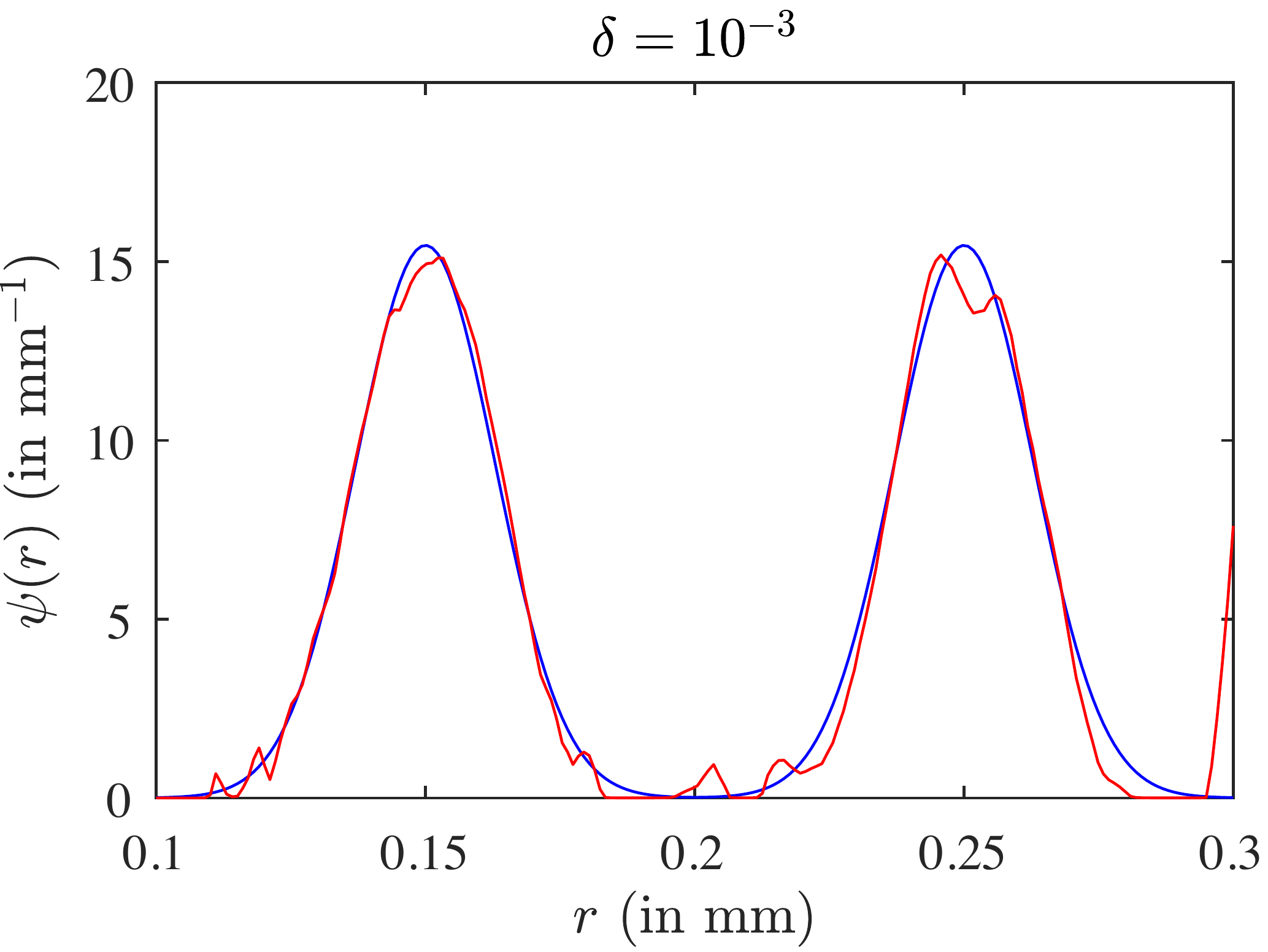}\\
    \includegraphics[width=.5\linewidth]{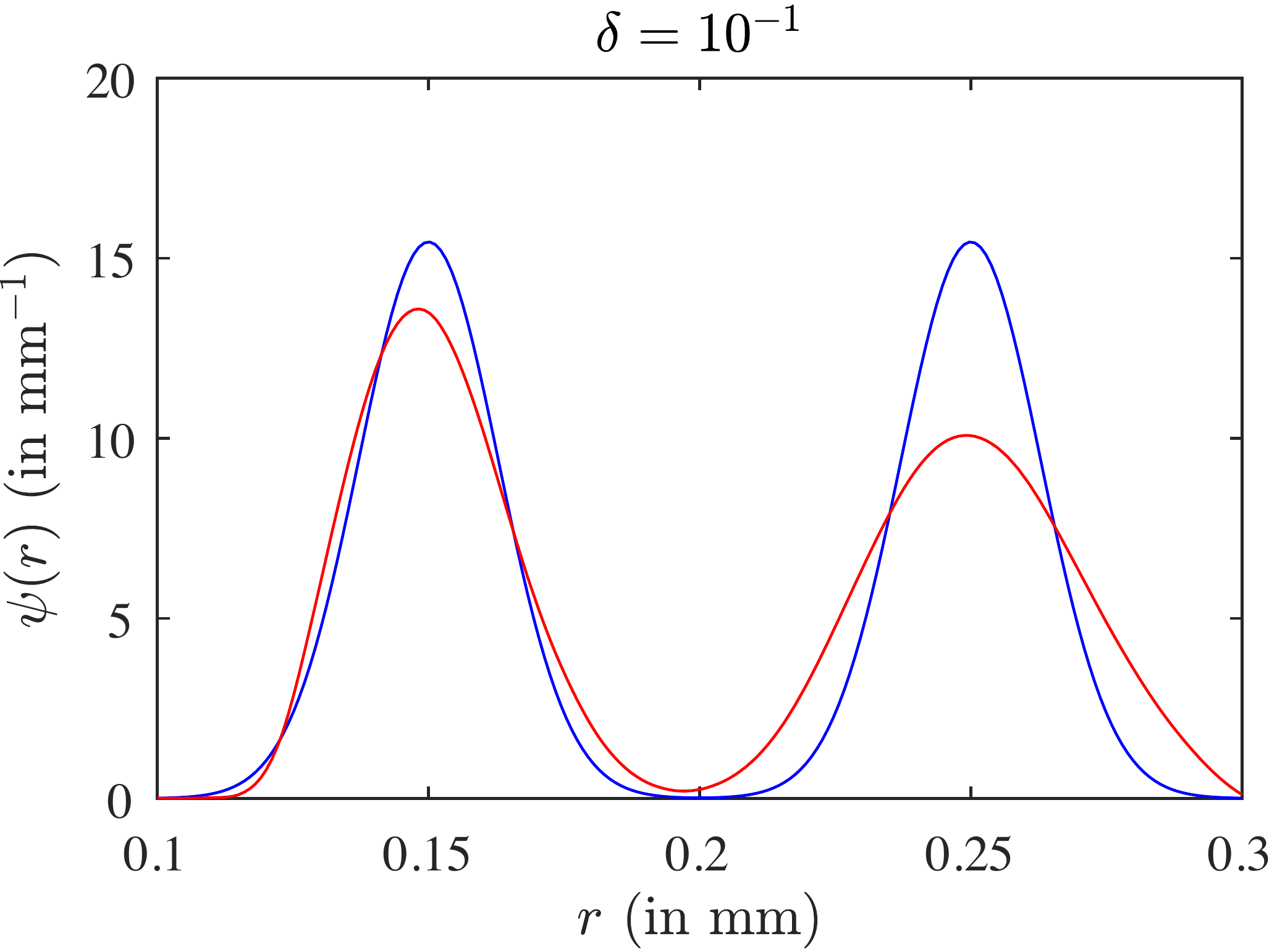}
    \caption{
    Estimation of the PSD by the Tikhonov regularization method.
    In blue: the PSD given by \eqref{eq:psdexp}. In red: the PSD estimated by the Tikhonov regularization method for $\delta = 10^{-5},\, 10^{-3},\, 10^{-1}$.
    }
    \label{fig:tikho}
\end{figure}
For all the considered values of $\delta$, the bi-modality of the PSD is recovered by the estimation.
However, when $\delta = 10^{-5}$, the regularization parameter is too small. The discontinuity issues of the  non-regularized problem \eqref{pbinv1} still appear.
On the contrary, $\delta = 10^{-1}$ is too large. The regularized problem is too far from the original minimization problem and some information on the amplitude of the PSD is lost.
With $\delta = 10^{-3}$, we recover a satisfying estimation of the original PSD by balancing these two effects.

\subsection{Injectivity analysis}\label{sec:inj}

Let us recall and prove the following statement.
\propinj*

\begin{proof}

Let $\dtc\in L^2((\rmin, \rmax); \R)$ such that $\mathcal{K}\dtc = 0$. Then, for almost every $\ell\in(0, \lmax)$, we have:
\begin{equation}
\begin{aligned}
0&=\int_{\rmin}^{\rmax} \noy(\ell, r)\dtc(r)\diff r
\\
&=
\int_{\rmin}^{\rmax}\dtc(r)\diff r-
\int_{\rmin}^{\rmax}
\dtc(r)
\int_{\phi=0}^{2\pi}\int_{\theta=0}^\pi
\sqrt{1-\left(\dfrac{\ell}{2r}\right)^2\alpha_\ee(\phi,\theta)}\frac{\sin \theta}{4\pi} \diff \theta\diff \phi\diff r.
\end{aligned}
\end{equation}
Let us consider the sequence $(\mathcal{K}\dtc)^{(2n)}(0)$. It can be computed using differentiation of the parameter integral.
The function $\ell \mapsto \sqrt{1-\left(\dfrac{\ell}{2r}\right)^2\alpha}$ is analytic at $0$, and we have (from the series expansion of $\sqrt{1-\ell^2}$) that 
$$
\left.\frac{\diff^{2n}}{\diff \ell^{2n}}\sqrt{1-\left(\dfrac{\ell}{2r}\right)^2\alpha}\right|_{\ell=0}
=
\frac{(2n)!}{(n!)^2(1-2n)4^{2n}}\frac{\alpha^{n}}{r^{2n}}
$$
Hence, for $n\geq 1$, 
$$
(\mathcal{K}\dtc)^{(2n)}(0)
=
\frac{(2n)!}{(n!)^2(1-2n)4^{2n}}
\int_{\phi=0}^{2\pi}\int_{\theta=0}^\pi
\alpha_\ee^{n}(\phi,\theta) \frac{\sin \theta}{4\pi} \diff \theta\diff \phi
\int_{\rmin}^{\rmax}
\frac{\dtc(r)}{r^{2n}}
\diff r.
$$
Let us denote
$$
a_n(\ee)=
\int_{\phi=0}^{2\pi}\int_{\theta=0}^\pi
\alpha_\ee^{n}(\phi,\theta) \frac{\sin \theta}{4\pi} \diff \theta\diff \phi,\qquad
b_n = \frac{(2n)!}{(n!)^2(1-2n)4^{2n}}
$$
so that 
$$
(\mathcal{K}\dtc)^{(2n)}(0)=a_n(\ee)b_n
\int_{\rmin}^{\rmax}
\frac{\dtc(r)}{r^{2n}}\diff r.
$$
Since $\mathcal{K}\dtc$ is constantly equal to $0$,
$\mathcal{K}\dtc^{(2n)}(0)=0$ for all $n\in \N^*$.
Since $a_n(\ee)b_n>0$ for all $\ee>0$ and all $n\in \N^*$, having $(\mathcal{K}\dtc)^{(2n)}(0) =0$ for all $n\in \N^*$ implies that 
$$
\int_{\rmin}^{\rmax}
\frac{\dtc(r)}{r^{2n}}\diff r
=0 \qquad \forall n\in \N^*.
$$
The family $\left(r\mapsto\frac{1}{r^{2n}}\right)_{n\in \N^*} $ is total in $L^2((\rmin, \rmax); \R)$ since $\rmin>0$ (see \cite[Section 6]{brivadis:hal-02529820}), which implies that $\dtc(r)=0$ for all $r\in[\rmin, \rmax]$. 
\end{proof}

\section{A dynamical observer for the multi-shape CLD-to-PSD problem}\label{sec4}

In this section, we consider that spheroids of different shape factors $\ee_i$, $i\in\{1,\dots,N\}$ are in the reactor at the same time.
The PSD associated to each shape is denoted by $\psi_i$.
This situation frequently occurs in batch crystallization processes: crystals with different shapes appear during the process due to polymorphism.
Then, the CLD data collected by the sensor is the sum of the CLDs associated to each PSD. More precisely, with the notations of \eqref{ntoQ}, the measured cumulative CLD $Q$ satisfies
\begin{equation}
    Q(\ell) = \sum_{i=1}^N \conc_i \int_{\rmin}^{\rmax} \noy_i(\ell, r) \psi_i(r)\diff r,
\end{equation}
where $k_i$ is the kernel defined in \eqref{eq:ker} with $\ee = \ee_i$ and
$$
\conc_i = \frac{{\int_{0}^{\lmax} q(\ell)\diff \ell}}{{\int_{\rmin}^{\rmax} \psi_i(r)\diff r}}.
$$

Then, a natural question to ask is: is it possible to reconstruct $\conc_i \psi_i$ from the knowledge of $Q$, as it is done in the case $N=1$ in the previous section?
Unfortunately, when $N>1$, the operator
\fonction{\mathcal{K}}{X^N}{Y}{(\bar\dtc_i)_{1\leq i\leq N}}{\left(\ell\mapsto
\sum_{i=1}^N \int_{\rmin}^{\rmax}\noy_i(\ell, r)\bar\dtc_i(r)\diff r
\right)}
may not be injective. Indeed, the different PSDs are intertwined in the CLD. In particular, in the case where $\ee_i = \ee_j$ for some $i\neq j$, there is no way to differentiate the part of the CLD due to $\psi_i$ and the part due to $\psi_j$.
Therefore, applying the Tikhonov regularization procedure in this case is not a convenient approach.
\medskip

However, in the case of a crystallization process, there is one more information that we can use to reconstruct PSD from CLD: a model of the PSD dynamics, based on a population balance equation.
Hence, the goal of this section is to estimate the PSD using a dynamical model of crystallization process, and a measure of the CLD over a finite time interval.
To do so, we will use a new approach based on the BFN algorithm.
Let us first determine the population balance equation.

\subsection{Population balance}

We consider an elementary model of a batch crystallization process occurring on a time window $[0, \tmax]$ (in $\text{s}$) (see \emph{e.g.}, \cite{Mullin, Mersmann}).
Polymorphism is a common phenomenon that may occur during crystallization: crystals may have several metastable shapes.
We assume that only a finite number $N$ of shapes may appear during the process, and that each of these shapes can be modeled as an ellipsoid.
As in section~\ref{sec:ellips}, an ellipsoid is fully characterized by a radius $r$ (in $\text{m}$) and an adimensional shape parameter $\ee$.
To each shape $i\in\{1,\dots,N\}$, we associate a parameter $\ee_i$.
We denote by $\dtc_i(t, \cdot)$ (in $\text{m}^{-1}.\text{m}^{-3}$) the PSD of particles having the shape $i$ at time $t$ in the reactor,
so that $\int_{r_1}^{r_2}\dtc_i(t, r)\diff x$ (in $\text{m}^{-3}$) is the number of crystals in the reactor at time $t$ having the shape $i$ and a radius $r$ between $r_1$ and $r_2$.
Let $\rmax$ be a maximal radius that no crystals of any shape can reach during the process (such as the size of the reactor):
\begin{equation}\label{eq:rmax}
    \dtc_i(t, \rmax) = 0,\qquad \forall t\in[0, \tmax],\ \forall i\in\{1,\dots,N\}.
\end{equation}
We assume that all crystals of all shapes appear at the same minimal radius $\rmin$, and denote by $u_i(t)$ (in $\text{m}^{-1}.\text{m}^{-3}$) the appearance of particles of size $\rmin$ and shape $i$ at time $t$:
\begin{equation}
    \dtc_i(t, \rmin) = u_i(t),\qquad \forall t\in[0, \tmax].
\end{equation}
The function $u_i$ is linked to the nucleation rate $R_i$ (in $\text{m}^{-3}.\text{s}^{-1}$) and the growth rate $G_i$ (in $\text{m}.\text{s}^{-1}$) of crystals of shape $i$ by the following formula:
\begin{equation}
    u_i(t) = \frac{R_i(t)}{G_i(t)},\qquad \forall i\in\{1,\dots,N\}.
\end{equation}
The growth rate is supposed to be positive at any time.
Considering McCabe hypothesis, $G_i$ is independent of the crystals size (but  depends on the shape).
Assuming that the different shapes do not interact with each other, the population balance leads to
\begin{equation}
    \frac{\partial \dtc_i}{\partial t}(t, r) + \vit_i(t) \frac{\partial \dtc_i}{\partial r}(t, r) = 0,\qquad \forall i\in\{1,\dots,N\}.
\end{equation}
Finally, assume that seed particles with PSD $\dtc_{i, 0}$ for each shape $i$ may lie in the reactor at time $t=0$:
\begin{equation}
    \dtc_i(0, r) = \dtc_{i, 0}(r),\qquad \forall r\in[\rmin, \rmax].
\end{equation}

To summarize, the evolution of the PSD through the process follows the set of partial differential equations (PDEs)
\begin{equation}
\forall i\in\{1,\dots,N\},\
\begin{aligned}
\begin{cases}
\frac{\partial \dtc_i}{\partial t}(t, r) + \vit_i(t) \frac{\partial \dtc_i}{\partial r}(t, r) = 0
& \forall t\in(0, \tmax), \forall r\in(\rmin, \rmax)\\
\dtc_i(0, r) = \dtc_{0, i}(r)
& \forall r\in[\rmin, \rmax]\\
\dtc_i(t, \rmin) = \cont_i(t)
&\forall t\in[0, \tmax]
\end{cases}
\end{aligned}
\label{systbilan}
\end{equation}
with the additional boundary conditions \eqref{eq:rmax}.

Since $u_i$ is not supposed to be measured, it is part of the unknown data to be reconstructed. Define $\psi_i(t, r)$ for $\rmin - \int_t^{t+\tmax} G_i(s)\diff s \leq r\leq \rmin$ in the following manner:
\begin{equation}
    \psi_i(t, r) = u_i(t+\tau) \text{ with } \tau\geq0 \text{ such that } \int_t^{t+\tau} G_i(s)\diff s = \rmin-r.
\end{equation}
Roughly speaking, $\psi_i(t, r)$ for $r<\rmin$ represents crystals that did not yet appear at time $t$, but will appear later at some time $t+\tau$.
If $t+\tau>\tmax$, set $\psi_i(t, r) = 0$.
Combining all the PSDs in a unique vector $\psi = (\psi_i)_{1\leq i\leq N}$,
$G(t) = \diag((G_i(t))_{1\leq i\leq N})$
and
$\psi_0(r) = (\psi_{0,i}(r))_{1\leq i\leq N}$,
system \eqref{systbilan} can be rewritten as
\begin{equation}
\begin{aligned}
\begin{cases}
\frac{\partial \dtc}{\partial t}(t, r) + \vit(t) \frac{\partial \dtc}{\partial r}(t, r) = 0
& \forall t\in(0, \tmax), \forall r\in(r_0, r_1)\\
\dtc(0, r) = \dtc_0(r)
& \forall r\in[r_0, r_1]
\end{cases}
\end{aligned}
\label{syst}
\end{equation}
where $r_0 = \rmin - \max_{1\leq i\leq N}\int_0^{\tmax} G_i(s)\diff s$ and $r_1 = \rmax$
and
with periodic boundary conditions $\psi(t, \rmin) = \psi(t, \rmax)$ (since the right boundary term does not influence $\psi(t, r)$ for $r>\rmin$ and $t\leq \tmax$).
Then, any solution $\psi$ of \eqref{syst} is such that $\psi(t, r)$ is the corresponding solution of \eqref{systbilan} when restricted to $t\in[0, \tmax]$ and $r\in[\rmin, \rmax]$.

\begin{proposition}[Well-posedness]\label{well}
If $G_i$ is positive and $C^1$, $\psi_{0, i}\in L^2((\rmin, \rmax); \R)$ and $u_i\in L^2((0, \tmax); \R)$ for all $i\in\{1,\dots,N\}$,
then system \eqref{syst} admits a unique solution $\psi\in C^0((0, \tmax); L^2((r_0, r_1); \R)^N)$.
\end{proposition}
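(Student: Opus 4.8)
The plan is to prove Proposition~\ref{well} by reducing the linear transport system \eqref{syst} to a transport equation with \emph{constant} unit speed through a time reparametrization, solving it explicitly by the method of characteristics, and transporting well-posedness back. Because $\vit(t) = \diag((G_i(t))_i)$ is diagonal, the system decouples into $N$ independent scalar transport equations
\begin{equation*}
\frac{\partial \dtc_i}{\partial t}(t,r) + G_i(t)\frac{\partial \dtc_i}{\partial r}(t,r) = 0,\qquad \dtc_i(0,\cdot)=\dtc_{0,i},
\end{equation*}
with the periodicity condition $\dtc_i(t,r_0)=\dtc_i(t,r_1)$, so it suffices to treat a single scalar equation and then take the vector whose components are the $N$ solutions.

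For a fixed $i$, I would first introduce the primitive $\Gamma_i(t)=\int_0^t G_i(s)\diff s$, which is a $C^2$, strictly increasing function on $[0,\tmax]$ since $G_i$ is positive and $C^1$; its inverse $\Gamma_i^{-1}$ is therefore $C^2$ on $[0,\Gamma_i(\tmax)]$. The characteristics of the equation are the curves $t\mapsto (t, r_0 + ((\Gamma_i(t)+c)\bmod(r_1-r_0)))$, so along characteristics $\dtc_i$ is constant, and one is led to the explicit formula
\begin{equation*}
\dtc_i(t,r) = \dtc_{0,i}\!\left(r_0 + \big((r - r_0 - \Gamma_i(t))\bmod (r_1-r_0)\big)\right).
\end{equation*}
I would verify directly that this defines an element of $C^0([0,\tmax];L^2((r_0,r_1);\R))$: for each $t$, the map $r\mapsto \dtc_i(t,r)$ is just $\dtc_{0,i}$ precomposed with a measure-preserving bijection of $(r_0,r_1)$ (a circular shift), hence lies in $L^2$ with $\|\dtc_i(t,\cdot)\|_{L^2}=\|\dtc_{0,i}\|_{L^2}$; and continuity in $t$ follows from continuity of translation in $L^2$ together with continuity of $\Gamma_i$. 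That this is the unique solution (in the appropriate weak/mild sense, since $\dtc_{0,i}$ is merely $L^2$) follows because the transport semigroup generated by $-G_i(t)\partial_r$ with periodic boundary conditions is exactly the shift group above: any solution must be constant along characteristics, and two solutions with the same initial datum coincide a.e. Equivalently, one may invoke the standard theory of linear first-order hyperbolic equations / strongly continuous evolution families for the generator $A(t) = -G_i(t)\partial_r$ on $L^2$ of the circle of length $r_1-r_0$, which applies since $t\mapsto G_i(t)$ is $C^1$ (in fact continuous suffices here because the operators all commute after the time change).

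Finally I would assemble the components: setting $\dtc=(\dtc_i)_{1\le i\le N}$ gives a solution of \eqref{syst} in $C^0([0,\tmax];L^2((r_0,r_1);\R)^N)$, and uniqueness for the system follows from uniqueness for each scalar component. I do not expect any serious obstacle here; the only mild subtlety is bookkeeping the periodic boundary condition correctly (and checking it is the right reformulation of \eqref{systbilan}, which the paper has already argued), and being slightly careful that with $L^2$ data the solution is a mild solution rather than a classical one — this is why the explicit shift formula, rather than a differentiation argument, is the cleanest route. The reparametrization by $\Gamma_i$ is what makes the time-dependence of the coefficient harmless.
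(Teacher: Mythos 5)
Your proof is correct, but it takes a genuinely different route from the paper's. The paper does not compute anything explicitly: it sets $\mathcal{D} = \{\psi\in X \mid \psi' \in X,\ \psi(r_0) = \psi(r_1)\}$, observes that $-\vit(t)\frac{\partial}{\partial r}$ is skew-adjoint on $\mathcal{D}^N$ with $C^1$ dependence on $t$, and invokes a theorem of Pazy (Chapter 5, Theorem 4.8) to obtain a bidirectional evolution system on $X^N$, hence existence and uniqueness of the mild solution. You instead decouple the components, reparametrize time by $\Gamma_i(t)=\int_0^t G_i(s)\diff s$, and write the solution as an explicit circular shift of $\dtc_{0,i}$ on the torus of length $r_1-r_0$, checking the $L^2$-isometry and continuity of translations by hand. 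Your approach is more elementary and self-contained, and it makes the unitarity of the flow visible (the $L^2$ norm is exactly conserved), but it leans on the special structure here — a diagonal, hence commuting-after-time-change, family of generators. The paper's abstract route buys more: the skew-adjoint evolution-system framework is exactly the hypothesis needed later for the BFN convergence result (Theorem~\ref{th:bfn} cites the same structure), so establishing it once here does double duty. One small point to tighten in your write-up: the claim that ``any solution must be constant along characteristics'' is immediate only for classical solutions; for merely $L^2$ data, uniqueness of the mild solution is cleanest via either the energy identity $\frac{\diff}{\diff t}\|\dtc_i(t,\cdot)\|_{L^2}^2 = -G_i(t)\left[\dtc_i^2\right]_{r_0}^{r_1} = 0$ (using periodicity, first on smooth data then by density) or the evolution-family theory you mention as a fallback — either patch closes the gap.
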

\begin{proof}
The proof relies on the theory of linear evolution systems (see \emph{e.g.} \cite{Pazy}).
Let $X = L^2((r_0, r_1); \R)$ and $\mathcal{D} = \{\psi\in X \mid \psi' \in X, \psi(r_0) = \psi(r_1) \}$.
The operator $-G(t)\frac{\partial}{\partial r}: \mathcal{D}^N\to X^N$ is linear, unbounded, and skew-adjoint for all $t\in[0, \tmax]$. Since $G$ is $C^1$, $t\mapsto -G(t)\frac{\partial\psi}{\partial r}$ is continuously differentiable for all $\psi\in\mathcal{D}^N$.
Hence, according to \cite[Chapter 5, Theorem 4.8]{Pazy}, it is the generator of a bidirectional evolution system on $X^N$.
In particular, \eqref{syst} admits a unique solution $\psi\in C^0((0, \tmax); X^N)$ for each $\psi_0\in X^N$.
\end{proof}

\subsection{Back and Forth Nudging algorithm}

Abusing notations, let us replace $\conc_i\psi_i$ by $\psi_i$, which satisfies the same PDE \eqref{systbilan}.
Suppose that $\conc_i$ is independent of time for all $i$, that is, the ratio between the number of particles and the number of chords seen by the sensor is constant.
Our goal is to estimate
$\psi(t, r) = (\psi_i(t, r))_{1\leq i\leq N}$ from
the knowledge of the cumulative CLD $Q(t, \ell)$ over the time interval $[0, \tmax]$, given by
\begin{equation}
    Q(t, \ell) = \sum_{i=1}^N \int_{\rmin}^{\rmax} \noy_i(\ell, r) \psi_i(t, r)\diff r.
\end{equation}
Let $X = L^2((r_0, r_1); \R)$, $\lmax = 2\rmax\max_{1\leq i\leq N}(\ee_i)$ and $Y = L^2((0, \lmax); \R)$, so that $Q(t, \cdot)\in Y$ for all $t\in[0, \tmax]$.
Define the operator
\fonction{\opc}{X^N}{Y}{\psi}{\left(\ell\mapsto
\sum_{i=1}^N \int_{\rmin}^{\rmax} \noy_i(\ell, r)\psi_i(r)\diff r\right).}
Its adjoint operator is
\fonction{\opc^*}{Y}{X^N}{Q}{\left(r\mapsto
\int_{0}^{\lmax} \noy_i(\ell, r)Q(\ell)\diff \ell\right)_{1\leq i\leq N}}
with $k_i(\ell, r) =0$ for $r\notin [\rmin, \rmax]$ or $\ell\notin[0, 2\rmax\ee_i]$.

Since the operator $\opc$ may not be injective, trying to invert it to recover $\psi$ from $Q$ at each time $t$ is not a suitable approach, and we rather make use of the dynamics \eqref{syst}.
To do so, we apply the so-called BFN algorithm.
It is an iterative method based on forward and backward dynamical observers.
Usually, observers are used in control engineering to estimate the state of a system online by using the measure. 
They are designed to converge towards the real state of the system over an infinite time interval.
On the contrary, BFN algorithm makes use of observers forward and backward on a finite time window. Each observer using the estimation made by the previous one, they are supposed to converge iteratively to the real state of the system.
More precisely, we use the BFN algorithm proposed in \cite{brivadis:hal-02529820}, based on Luenberger forward and backward observers.
In our context, the observer system is the following, where $\mu>0$ is a degree of freedom:
\begin{align}
&\begin{cases}
\frac{\partial \etath^{2n}}{\partial t}(t, r) = -\vit(t) \frac{\partial \etath^{2n}}{\partial r}(t, r)- \mu\opc^*(\opc\etath^{2n}(t, \cdot)-\bar Q(t, \cdot))
\quad
\forall t\in(0, \tmax), \forall r\in(r_0, r_1)\\
\etath^{2n}(0, r) =
\begin{cases}
\etath^{2n-1}(0, r) & \text{if } n\geq 1\\
\etath_0(r) & \text{otherwise}
\end{cases}
\quad \forall r\in(r_0, r_1)
\end{cases}
\label{obs2}\\
&\begin{cases}
\frac{\partial \etath^{2n+1}}{\partial t}(t, r) = -\vit(t) \frac{\partial \etath^{2n+1}}{\partial r}(t, r) +\mu\opc^*(\opc\etath^{2n+1}(t, \cdot)-\bar Q(t, \cdot))
\
\forall t\in(0, \tmax), \forall r\in(r_0, r_1)\\
\etath^{2n+1}(\tmax, r) = \etath^{2n}(\tmax, r)\quad \forall r\in(r_0, r_1)
\end{cases}
\label{obs2b}
\end{align}

In this system, $\hat\psi^n(t, r)$ is the estimation of the actual PSD $\psi(t, r)$ obtained after $n$ iterations of the algorithm. Note that the algorithm relies only on the knowledge of the normalized CLD $\bar Q(t, \ell)$ on the time interval $[0, \tmax]$.
The following theorem ensures the convergence of $\hat\psi^n$ to $\psi$.
\begin{theorem}\label{th:bfn}
Assume that for all $\psi_0\in X$, the following implication is satisfied:
\begin{equation}\label{cond:obs}
    \left(\forall t\in[0, \tmax],\ \opc \psi(t, \cdot) = 0\right)
    \quad\implies\quad
    \psi_0 = 0,
\end{equation}
where $\psi$ denotes the solution of \eqref{syst} with initial condition $\psi_0$.
Then, for all $\mu>0$, all $t\in[0, \tmax]$ and almost all $r\in[r_0, r_1]$,
\begin{equation}
    \etath^n(t, r)
    \cvl{n\cv+\infty}\psi(t, r).
\end{equation}
\end{theorem}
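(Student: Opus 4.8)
The plan is to instantiate the abstract Back-and-Forth-Nudging convergence result of \cite{brivadis:hal-02529820}; concretely, I recast the coupled forward and backward observers \eqref{obs2}--\eqref{obs2b} as an iteration on the initial datum, driven by a single bounded operator $\mathbb{L}$ on $X^{N}$, and show that $\mathbb{L}$ is a self-adjoint contraction of the form $S^{*}S$ whose only fixed point is $0$; the spectral theorem then forces $\mathbb{L}^{n}\to0$ strongly, which is precisely the claimed convergence. I work throughout under the standing hypotheses of Proposition~\ref{well}, so that \eqref{syst} is well posed and generated by the family $A(t)=-\vit(t)\frac{\partial}{\partial r}$, which is skew-adjoint on $\mathcal{D}^{N}$ for every $t$. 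Since $\mu\opc^{*}\opc$ is bounded and self-adjoint, the perturbed generators $A(t)\mp\mu\opc^{*}\opc$ still generate bidirectional evolution systems (bounded perturbation of an evolution family, \cite{Pazy}); together with $\opc^{*}\bar Q\in C^{0}([0,\tmax];X^{N})$ this makes \eqref{obs2}--\eqref{obs2b} well posed, so $\etath^{n}$ is well defined.

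Write $\bar Q(t,\cdot)=\opc\psi(t,\cdot)$ for the true solution $\psi$ of \eqref{syst}, and set $e^{2n}:=\etath^{2n}-\psi$, $e^{2n+1}:=\etath^{2n+1}-\psi$. Then $\partial_{t}e^{2n}=A(t)e^{2n}-\mu\opc^{*}\opc\,e^{2n}$ forward on $[0,\tmax]$ with $e^{2n}(0)=e^{2n-1}(0)$ (and $e^{0}(0)=\etath_{0}-\psi_{0}$), while $\partial_{t}e^{2n+1}=A(t)e^{2n+1}+\mu\opc^{*}\opc\,e^{2n+1}$ is run backward from $\tmax$ with $e^{2n+1}(\tmax)=e^{2n}(\tmax)$. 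Because each $A(t)$ is skew-adjoint and $\opc^{*}\opc\geq0$, one gets $\frac{\diff}{\diff t}\norm{e^{2n}(t)}^{2}=-2\mu\norm{\opc e^{2n}(t)}^{2}\leq0$ along the forward error, and the substitution $t\mapsto\tmax-t$ turns the backward error into a forward problem obeying the same dissipation identity. Hence each sweep is a contraction on $X^{N}$ dissipating exactly the observed energy $\int\norm{\opc e}^{2}\,\diff t$.

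Let $S:=\Phi_{+}(\tmax,0)$ be the forward error propagator and $\Phi_{-}(0,\tmax)$ the backward one, so $e^{2(n+1)}(0)=\Phi_{-}(0,\tmax)\,S\,e^{2n}(0)$, i.e. $e^{2n}(0)=\mathbb{L}^{n}e^{0}(0)$ with $\mathbb{L}:=\Phi_{-}(0,\tmax)\,S$. The key identity is that $t\mapsto\langle\Phi_{+}(t,0)a,\Phi_{-}(t,\tmax)b\rangle$ is constant on $[0,\tmax]$: differentiating, the contribution $\langle A(t)u,v\rangle+\langle u,A(t)v\rangle$ vanishes by skew-adjointness of $A(t)$ and the contribution $-\mu\langle\opc^{*}\opc u,v\rangle+\mu\langle u,\opc^{*}\opc v\rangle$ vanishes by self-adjointness of $\opc^{*}\opc$; evaluating at $t=\tmax$ and $t=0$ gives $\Phi_{-}(0,\tmax)=S^{*}$. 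Thus $\mathbb{L}=S^{*}S$ is self-adjoint with $0\leq\mathbb{L}\leq I$, so by the spectral theorem $\mathbb{L}^{n}\to P$ strongly, $P$ being the orthogonal projection onto $\ker(I-\mathbb{L})$. If $\mathbb{L}x=x$ then $\norm{Sx}^{2}=\langle S^{*}Sx,x\rangle=\norm{x}^{2}$, so the dissipation identity forces $\opc\,\Phi_{+}(t,0)x=0$ for all $t\in[0,\tmax]$; the forward error equation then reduces to the free system \eqref{syst}, so $t\mapsto\Phi_{+}(t,0)x$ is the solution of \eqref{syst} with initial datum $x$ and vanishing observation, whence $x=0$ by \eqref{cond:obs}. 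Therefore $P=0$, $e^{2n}(0)=\mathbb{L}^{n}(\etath_{0}-\psi_{0})\to0$ in $X^{N}$, and since $e^{2n}(t)=\Phi_{+}(t,0)e^{2n}(0)$ and $e^{2n+1}(t)=\Phi_{-}(t,\tmax)\,S\,e^{2n}(0)$ with contractive propagators, $\sup_{t\in[0,\tmax]}\norm{e^{n}(t)}\to0$. This gives $\etath^{n}(t,\cdot)\to\psi(t,\cdot)$ in $L^{2}((r_{0},r_{1});\R)^{N}$ for every $t$ — the pointwise-in-$r$ statement of the theorem being the a.e. reformulation of this limit.

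\textbf{Main obstacle.} Compared with the autonomous BFN theory of \cite{brivadis:hal-02529820, Ramdani}, the sensitive step is the time-dependence of $\vit$: the self-adjoint structure $\mathbb{L}=S^{*}S$ cannot be read off from an explicit exponential formula and must be extracted from the constancy of $\langle\Phi_{+}a,\Phi_{-}b\rangle$, which is the technical core. Conceptually, since \eqref{cond:obs} is only an \emph{approximate} observability condition, no geometric convergence can be expected, and it is exactly the factorization $\mathbb{L}=S^{*}S$ that upgrades the soft fact ``the limiting error is unobservable, hence zero'' into \emph{strong} convergence of the full iteration via the spectral theorem. The remaining points — well-posedness of the coupled observer PDEs and the Kato-type regularity of the evolution families, both inherited from the $C^{1}$ hypothesis on $G$ in Proposition~\ref{well} — are routine.
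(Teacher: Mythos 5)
Your argument is correct, and at the level of ideas it is the same as the paper's: verify that the transport generator $-\vit(t)\frac{\partial}{\partial r}$ is skew-adjoint on the periodic domain and that \eqref{cond:obs} is an (approximate) observability condition, then conclude by the abstract BFN convergence theorem. The difference is one of packaging: the paper's proof is a two-line appeal to \cite[Theorem~3.9.(i)]{brivadis:hal-02529820}, whereas you unfold that citation and reprove the abstract result — the factorization $\mathbb{L}=S^{*}S$ of the one-sweep error map via the constancy of $\langle\Phi_{+}(t,0)a,\Phi_{-}(t,\tmax)b\rangle$, the dissipation identity, and the spectral-theorem argument showing $\mathbb{L}^{n}\to P=0$ strongly under \eqref{cond:obs}. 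Your version buys self-containedness and makes explicit why only strong (non-geometric) convergence is obtained under approximate observability, at the cost of redoing work the paper delegates; both yield convergence of $\etath^{n}(t,\cdot)$ to $\dtc(t,\cdot)$ in $L^{2}$ for each $t$, which is the sense in which the theorem's a.e.-in-$r$ statement should be read in either treatment.
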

\begin{proof}
This result is an application of \cite[Theorem 3.9.(i)]{brivadis:hal-02529820}.
Let $X = L^2((r_0, r_1); \R)$ and $\mathcal{D} = \{\psi\in X \mid \psi' \in X, \psi(r_0) = \psi(r_1) \}$.
As in Proposition~\ref{well}, $-G(t)\frac{\partial}{\partial r}: \mathcal{D}^N\to X^N$ is skew-adjoint for all $t\in[0, \tmax]$.
Moreover, \eqref{cond:obs} states that \eqref{syst} with output $\mathcal{K}\psi$ is observable, that is, its observable subspace is $X$.
Hence, all the hypotheses of \cite[Theorem 3.9.(i)]{brivadis:hal-02529820} are satisfied, so that the BFN algorithm converges to the actual state of the system as the number of iterations goes to infinity.
\end{proof}

Condition \eqref{cond:obs} is an \emph{observability} condition, and can be reformulated in the following way.
If two initial conditions $\psi_0$ and $\tilde\psi_0$
(\emph{i.e.}, $(u_i)$, $(\psi_{0,i})$, $(\tilde u_i)$, $(\tilde\psi_{0,i})$, $1\leq i\leq N$)
are such that
the corresponding cumulative CLDs $Q$ and $\tilde Q$
are the same on the whole time interval $[0, \tmax]$,
then $\psi_0 = \tilde\psi_0$, which implies that the two PSDs are also the same on $[0, \tmax]$.
Indeed, by taking the difference $\psi-\tilde\psi$, we recover \eqref{cond:obs}.
Hence, the main question to investigate is now: when does the observability condition \eqref{cond:obs} holds?
In some crystallization processes, there are two shapes of crystals of the same species appearing simultaneously in the reactor due to polymorphism. Frequently, one of these shapes is almost spherical, and the other is very elongated (see Figure~\ref{fig:petitspoiscarottes} and the experiments of \cite{GAO} for example).
One of the main results of the paper is that in this case, the system is observable.
Hence, according to Theorem~\ref{th:bfn}, the BFN algorithm is able to estimate the actual PSD of each shape from the knowledge of the CLD during the process.
The proof of the result is postponed in appendix~\ref{app}.

\begin{figure}[ht!]
    \centering
 
    \includegraphics[width=.5\linewidth]{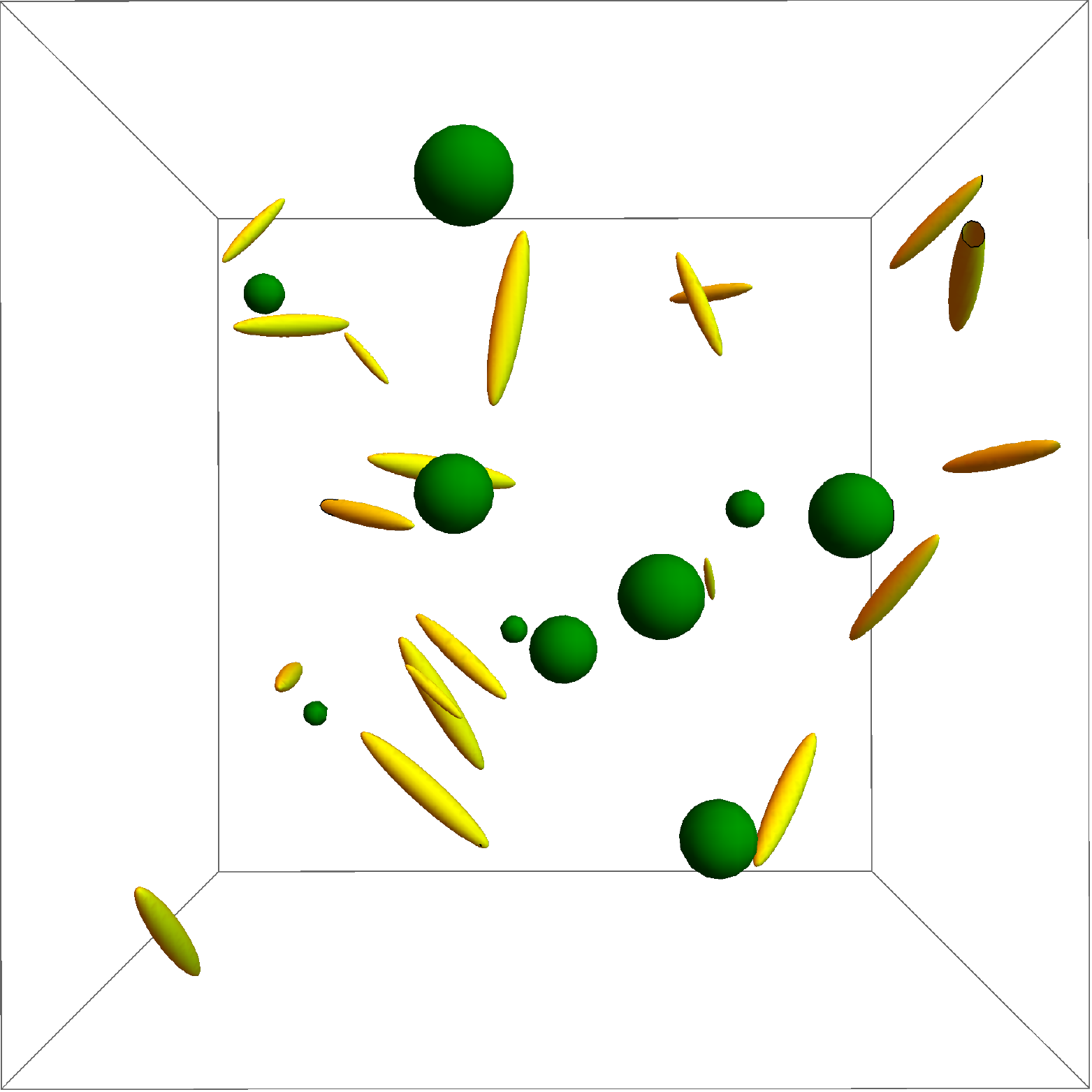}   
    \caption{Simulated suspension of ideal particles of two shapes, distributed in size: spheres ($\eta = 1$, in green) and prolate spheroids ($\eta = 6$, in yellow), in a cubic volume of 1cm$^3$.}
    \label{fig:petitspoiscarottes}

\end{figure}

\begin{restatable}{theorem}{thobs}\label{th:obs}
Consider two clusters of crystals $(N=2)$ with shapes $\ee_1=1$ and $\ee_2>1$.
Assume that their growth rate have constant ratio $\frac{g_1}{g_2}$, \textit{i.e.}, $g_2G_1(t) = g_1G_2(t)$ for all $t\in[0, \tmax]$.
Then for all $\psi_0\in H^2(r_0, r_1)$ satisfying the boundary condition \eqref{eq:rmax},
\begin{equation}\label{eq:obs}
    \left(\forall t\in[0, \tmax],\ \opc \psi(t, \cdot) = 0\right)
    \quad\implies\quad
    \psi_0 = 0,
\end{equation}
\end{restatable}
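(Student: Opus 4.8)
The plan is to turn the vanishing of the cumulative CLD into a countable family of moment identities by analyticity in $\ell$ (reusing the mechanism of the proof in Section~\ref{sec:inj}), and then to exploit the special value $\ee_1=1$ together with the transport dynamics and the constant‑ratio hypothesis to force $\psi_0=0$.

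\emph{Step 1 — reduction to moments.} For $t\in[0,\tmax]$ set $\mu_{i,n}(t)=\int_{\rmin}^{\rmax}\psi_i(t,r)\,r^{-2n}\diff r$; these are finite since $\psi_i(t,\cdot)\in L^2(\rmin,\rmax)$ and $\rmin>0$. Exactly as in Section~\ref{sec:inj}, the map $\ell\mapsto(\opc\psi(t,\cdot))(\ell)=\sum_{i=1}^2\int_{\rmin}^{\rmax}\noy_i(\ell,r)\psi_i(t,r)\diff r$ is real‑analytic on $(-2\rmin,2\rmin)$, has only even derivatives at $0$, and
\begin{equation*}
(\opc\psi(t,\cdot))^{(2n)}(0)=b_n\sum_{i=1}^2 a_n(\ee_i)\,\mu_{i,n}(t),\qquad b_n=\frac{(2n)!}{(n!)^2(1-2n)4^{2n}}\neq0,
\end{equation*}
with $a_n(\ee)=\int_0^{2\pi}\!\!\int_0^{\pi}\alpha_\ee^n(\phi,\theta)\tfrac{\sin\theta}{4\pi}\diff\theta\diff\phi>0$. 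Since $\opc\psi(t,\cdot)\equiv0$ on $(0,\lmax)\supseteq(0,2\rmin)$, the analytic function above vanishes on an interval, so all its derivatives at $0$ vanish, giving $\sum_{i=1}^2 a_n(\ee_i)\mu_{i,n}(t)=0$ for all $n\geq1$ and all $t$.

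\emph{Step 2 — using $\ee_1=1$ and a change of variables.} Because $\alpha_1(\phi,\theta)\equiv1$ one has $a_n(1)=1$ for every $n$, so Step~1 reads $\mu_{1,n}(t)=-a_n(\ee_2)\mu_{2,n}(t)$. Now $a_n(\ee_2)=\mathbb{E}[\alpha_{\ee_2}^{\,n}]$ with $\alpha_{\ee_2}\in[\ee_2^{-2},1]$; writing $\alpha_{\ee_2}=\beta^2$ and substituting $r=\beta\rho$ in $a_n(\ee_2)\mu_{2,n}(t)$ shows $a_n(\ee_2)\mu_{2,n}(t)=\int_{\rmin}^{\ee_2\rmax}\widetilde\psi_2(t,\rho)\,\rho^{-2n}\diff\rho$, where $\widetilde\psi_2(t,\rho)=\int_{1/\ee_2}^{1}\beta\,\psi_2(t,\beta\rho)\diff\widetilde{\mathbb{P}}(\beta)$, $\widetilde{\mathbb{P}}$ being the law of $\sqrt{\alpha_{\ee_2}(\phi,\theta)}$ under $\tfrac{\sin\theta}{4\pi}\diff\theta\diff\phi$ and $\psi_2(t,\cdot)$ extended by $0$ outside $(\rmin,\rmax)$. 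Hence $\int_{\rmin}^{\ee_2\rmax}\big(\psi_1(t,\rho)+\widetilde\psi_2(t,\rho)\big)\rho^{-2n}\diff\rho=0$ for all $n\geq1$, and the totality of $(\rho\mapsto\rho^{-2n})_{n\geq1}$ in $L^2(\rmin,\ee_2\rmax)$ (valid since $\rmin>0$, cf.\ \cite[Section~6]{brivadis:hal-02529820}) yields the pointwise identity
\begin{equation*}
\psi_1(t,\rho)=-\int_{1/\ee_2}^{1}\beta\,\psi_2(t,\beta\rho)\,\diff\widetilde{\mathbb{P}}(\beta),\qquad\text{a.e. }\rho\in(\rmin,\rmax),\ \forall t\in[0,\tmax]
\end{equation*}
(the same computation on $(\rmax,\ee_2\rmax)$ gives $\int_{1/\ee_2}^{1}\beta\,\psi_2(t,\beta\rho)\diff\widetilde{\mathbb{P}}(\beta)=0$ there). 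An equivalent route is through the Cauchy–Stieltjes transforms $F_i(t,w)=\int_{\rmin}^{\rmax}\psi_i(t,r)(r^2-w)^{-1}\diff r$, for which the moment relation becomes $F_1(t,w)=-\mathbb{E}[\alpha_{\ee_2}F_2(t,\alpha_{\ee_2}w)]$, and the identity above follows by comparing boundary values across the cut.

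\emph{Step 3 — inserting the dynamics.} Since each $G_i>0$ and $\sigma_i(t):=\int_0^t G_i(s')\diff s'\leq\rmin-r_0$, the solution of \eqref{syst} is the translation $\psi_i(t,r)=\psi_{0,i}(r-\sigma_i(t))$ with no wrap‑around for $r\in(\rmin,\rmax)$, and $g_2G_1=g_1G_2$ gives $\sigma_1(t)=\lambda\sigma_2(t)$ with $\lambda=g_1/g_2>0$. Putting $s=\sigma_2(t)$, the identity of Step~2 becomes $\psi_{0,1}(\rho-\lambda s)=-\int_{1/\ee_2}^{1}\beta\,\psi_{0,2}(\beta\rho-s)\diff\widetilde{\mathbb{P}}(\beta)$ on a nonempty open set of pairs $(\rho,s)$. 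Applying the directional derivative $\partial_s+\lambda\partial_\rho$ twice — which annihilates the left‑hand side and is legitimate because $\psi_0\in H^2$ — yields
\begin{equation*}
\int_{1/\ee_2}^{1}\psi_{0,2}''(\beta\rho-s)\,\diff\nu(\beta)=0,\qquad \diff\nu(\beta):=\beta(\lambda\beta-1)^2\diff\widetilde{\mathbb{P}}(\beta),
\end{equation*}
where $\nu$ is a positive, compactly supported measure with $\nu(\R)>0$ (since $\widetilde{\mathbb{P}}$ has no atom). One then concludes $\psi_{0,2}''\equiv0$, hence $\psi_{0,2}\equiv0$ (a compactly supported $C^1$ function with zero second derivative, using also that $\psi_{0,2}$ vanishes on $(\rmax-\sigma_2(\tmax),\rmax)$ by \eqref{eq:rmax}), and finally $\psi_{0,1}\equiv0$ by going back to the relation of Step~2, i.e.\ $\psi_0=0$.

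\emph{Main obstacle.} The delicate point is the last implication of Step~3: the averaged identity $\int\psi_{0,2}''(\beta\rho-s)\diff\nu(\beta)=0$ is only known on an open subset of $(\rho,s)$‑space, not on all of $\R^2$, so one cannot simply take a Fourier transform. The idea to push it through is that for fixed $\rho$ the left‑hand side is the convolution of the compactly supported $L^2$ function $\psi_{0,2}''$ with the reflected push‑forward of $\nu$ under $\beta\mapsto\beta\rho$, whose Fourier transform is $\widehat\nu(\omega\rho)$ — an entire function, not identically zero since $\nu\geq0$ and $\nu(\R)>0$; letting $\rho$ vary over an interval then forces $\widehat{\psi_{0,2}''}\equiv0$. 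Carrying this out rigorously while keeping track of all the supports (and of the indicator functions hidden in the ``extend by $0$'' conventions of Step~2 — this is also the place where $\ee_2>1$, and not merely $\ee_2\neq1$, is genuinely used, since it is what makes the scaling nontrivial) is the technical core; everything else is bookkeeping. It is worth noting that $H^2$ regularity is exactly what is needed: when $1<\lambda<\ee_2$ a single $(\partial_s+\lambda\partial_\rho)$‑derivative only produces the sign‑changing weight $\beta(\lambda\beta-1)$, whereas squaring it gives the positive weight $\nu$ — at the cost of one extra derivative on $\psi_{0,2}$.
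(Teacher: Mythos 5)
Your Steps 1 and 2 are sound, and your use of the constant-ratio hypothesis to turn two time derivatives into $g_i^2\,\partial_r^2$ is exactly the mechanism the paper exploits. But the proof does not close: everything rests on the final implication of Step 3,
\begin{equation*}
\int_{1/\ee_2}^{1}\psi_{0,2}''(\beta\rho-s)\,\diff\nu(\beta)=0\ \text{ on an open set of }(\rho,s)\quad\Longrightarrow\quad\psi_{0,2}''\equiv 0,
\end{equation*}
which is precisely the step you leave as an ``idea''. The Fourier argument you sketch does not apply: for fixed $\rho$ the left-hand side is (up to a reflection) the convolution of the compactly supported function $\psi_{0,2}''$ with the compactly supported push-forward of $\nu$ under $\beta\mapsto\beta\rho$; such a convolution is itself just a compactly supported function and can perfectly well vanish on a bounded $s$-window without either factor vanishing. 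Since you only know the vanishing on such a window, you cannot take a Fourier transform in $s$, and letting $\rho$ vary does not enlarge the window. In addition, the identity you differentiate is not the one Step 2 actually delivers: the zero-extension of $\psi_2(t,\cdot)$ outside $(\rmin,\rmax)$ is a cutoff at a \emph{fixed} radius, which does not commute with the transport, so the correct right-hand side is $\int\beta\,\psi_{0,2}(\beta\rho-s)\,\ind_{(\rmin,\rmax)}(\beta\rho)\,\diff\widetilde{\mathbb{P}}(\beta)$, a $\rho$-dependent truncation of the $\beta$-integral; applying $(\partial_s+\lambda\partial_\rho)^2$ then produces boundary terms involving $\psi_{0,2}(\rmin-s)$ and $\partial_r\psi_{0,2}(\rmin-s)$ that your computation silently discards.

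The paper closes both holes by never leaving moment space. It first disposes of the boundary case (the analogue of your discarded boundary terms) in Proposition~\ref{P:border_obs}: if one of $\dtc_i(\rmin)$, $\partial_r\dtc_i(\rmin)$ is nonzero, Lemma~\ref{L:psi0_neq0} and Corollary~\ref{C:dpsi0_neq0} give the exact decay rate of $\mathcal{F}_n(\dtc_i)$, and the facts $a_n(\ee_2)\to0$ and $a_n(\ee_2)\geq 2\sqrt{\pi/n}$ from Lemma~\ref{L:seq_moments} make the two sides of $\mathcal{F}(\dtc_1)=\mathcal{A}(\ee_2)\mathcal{F}(\dtc_2)$ decay at incompatible rates. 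Then, in the $H^2_0$ case (Proposition~\ref{P:second_der_obs}), integration by parts gives $\mathcal{F}_n(\dtc_i'')=2n(2n+1)\mathcal{F}_{n+1}(\dtc_i)$, and combining the moment relation with its second time derivative forces, along a subsequence of indices where $\mathcal{F}_n(\dtc_1)\neq0$, the identity $\frac{g_1^2}{g_2^2}\frac{a_n(1)}{a_{n+1}(1)}\frac{a_{n+1}(\ee_2)}{a_n(\ee_2)}=1$, which is impossible because $a_n(1)\equiv1$ while $a_{n+1}(\ee_2)/a_n(\ee_2)\neq1$ although it tends to $1$. If you wish to keep your real-space reformulation, the Fourier step must be replaced by a support (Titchmarsh-type) argument carefully matched to the actual window and to the truncations above; as written, the argument is incomplete at its decisive step.
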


\begin{remark}\label{rem}
The time $\tmax>0$ is not necessarily the duration of the full process, it can theoretically be chosen as small as desired. This property is called ``small time'' observability.
Even if the knowledge of the CLD at a fixed time $t$ is not sufficient to estimate the corresponding PSD, measuring the CLD on a small time interval $[t, t+dt]$ on which the process occurs is sufficient to estimate the PSD on this same interval.
\end{remark}

    

\subsection{Numerical simulations}

For the numerical simulations, we consider the set of parameters given in Table~\ref{tab:param}.
\begin{table}[ht!]
    \centering
    \begin{tabular}{|c|c|c|c|}
    \hline
        $\rmin = 1.0\times 10^{-4}$m
        &$\rmax = 2.0\times 10^{-4}$m
        &$t_{\max} = 1$h
        &$N = 2$
        \\
        \hline
        $G_1 = 1.0\times 10^{-4}$m.h$^{-1}$
        &$G_2 = 2.0\times 10^{-4}$m.h$^{-1}$
        &$\ee_1 = 1$
        &$\ee_2 = 2$
        \\
        \hline
    \end{tabular}
    \caption{Parameters of the numerical simulation of the BFN algorithm.}
    \label{tab:param}
\end{table}
Simulations of \eqref{syst} and \eqref{obs2}-\eqref{obs2b} are performed with forward/backward finite differences, with spacing $dx = \frac{1}{100}$ for $\psi_1$ with growth rate $G_1$ and $dx = \frac{1}{50}$ for $\psi_2$ with growth rate $G_2$.
We fix $\psi_1 = \psi_2 = 0$ at the initial time $t = 0$, and choose the nucleation rates $u_1$ and $u_2$ such that, at time $t = 1$h, we have (see blue line on Figure~\ref{fig:bfn_sol10020})
\begin{equation}
    \psi_1(\tmax, r) = \psi_2(\tmax, r)
    =
    \frac{e^{-30(r-1.5\times 10^{-4})^2}}{\int_{1\times 10^{-4}}^{2\times 10^{-4}} e^{-30(\rho-1.5\times 10^{-4})^2} \diff \rho}.
\end{equation}
The BFN algorithm is initialized at $\hat\psi_1 = \hat\psi_2 = 0$.
On Figure~\ref{fig:bfn_sol10020}, we plot the estimations $\hat\psi_1$ and $\hat\psi_2$ obtained by BFN after $2n=$ 20 and 100 iterations.
After $20$ iterations, the shape of the two PSDs is already well estimated.
After $100$ iterations, the estimation of $\psi_2$ is far more accurate.
The error between the actual PSD and the estimation made by BFN, given by
\begin{equation}
    \|\eps^{2n}(t)\|^2_{L^2} = 
    \int_{1\times 10^{-4}}^{2\times 10^{-4}}
    \left(
    \psi_1(t, r) - \hat\psi_1^{2n}(t, r)
    \right)^2
    +
    \left(
    \psi_2(t, r) - \hat\psi_2^{2n}(t, r)
    \right)^2
    \diff r,
\end{equation}
is plotted in Figure~\ref{fig:bfn_err}. Applying a linear regression for $2n\geq30$, the rate of convergence is estimated as
$\|\eps^{2n}(t)\|_{L^2} \approx 0.156 \times 0.986^n$.

\begin{figure}[ht!]
    \centering
    \includegraphics[width=.5\linewidth]{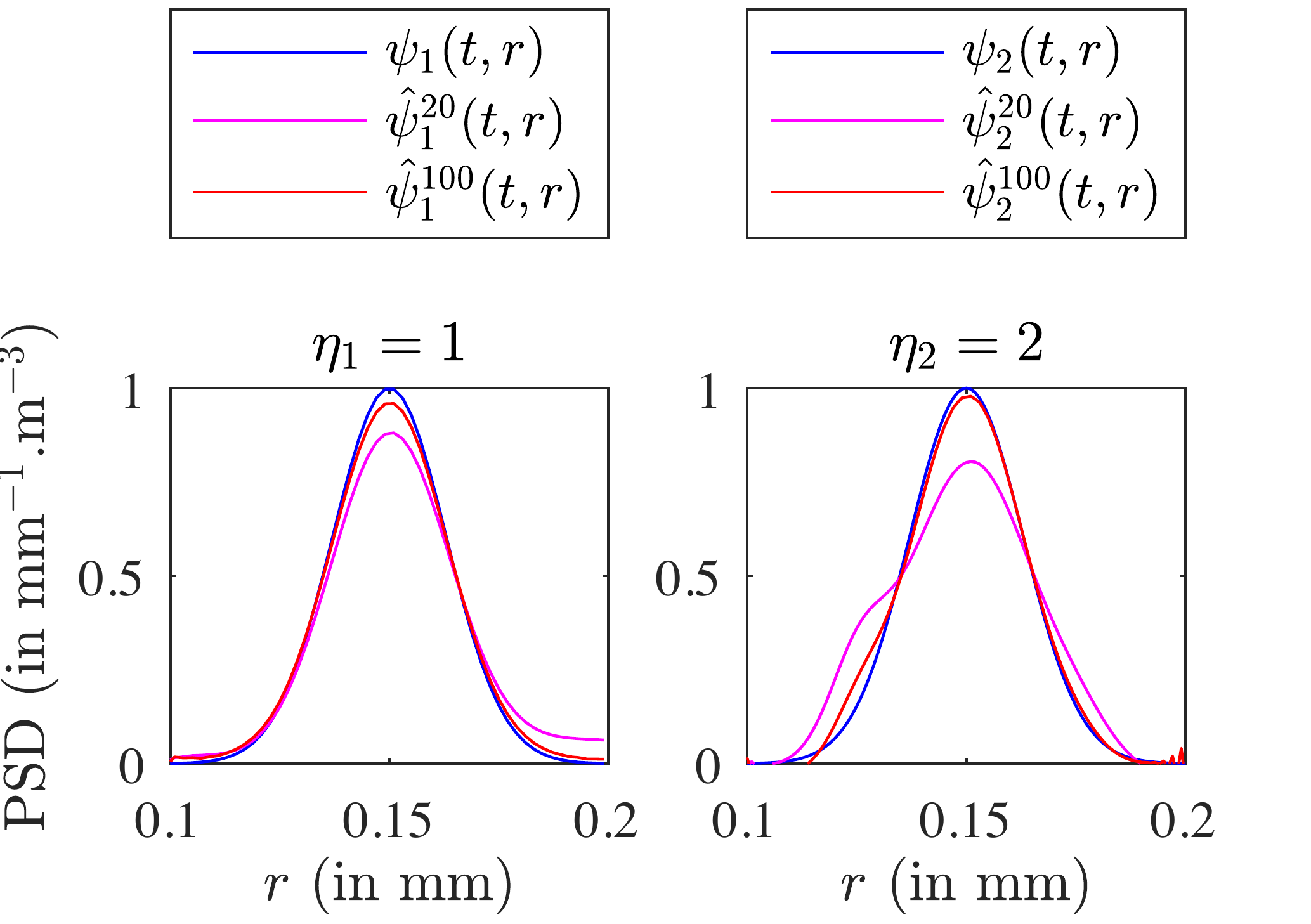}
    \caption{PSDs $\psi_1$ and $\psi_2$ at time $t=1$h and their estimation obtained by the BFN algorithm after $2n = 20$ and 100 iterations.}
    \label{fig:bfn_sol10020}
\end{figure}

\begin{figure}[ht!]
    \centering
    \includegraphics[width=.5\linewidth]{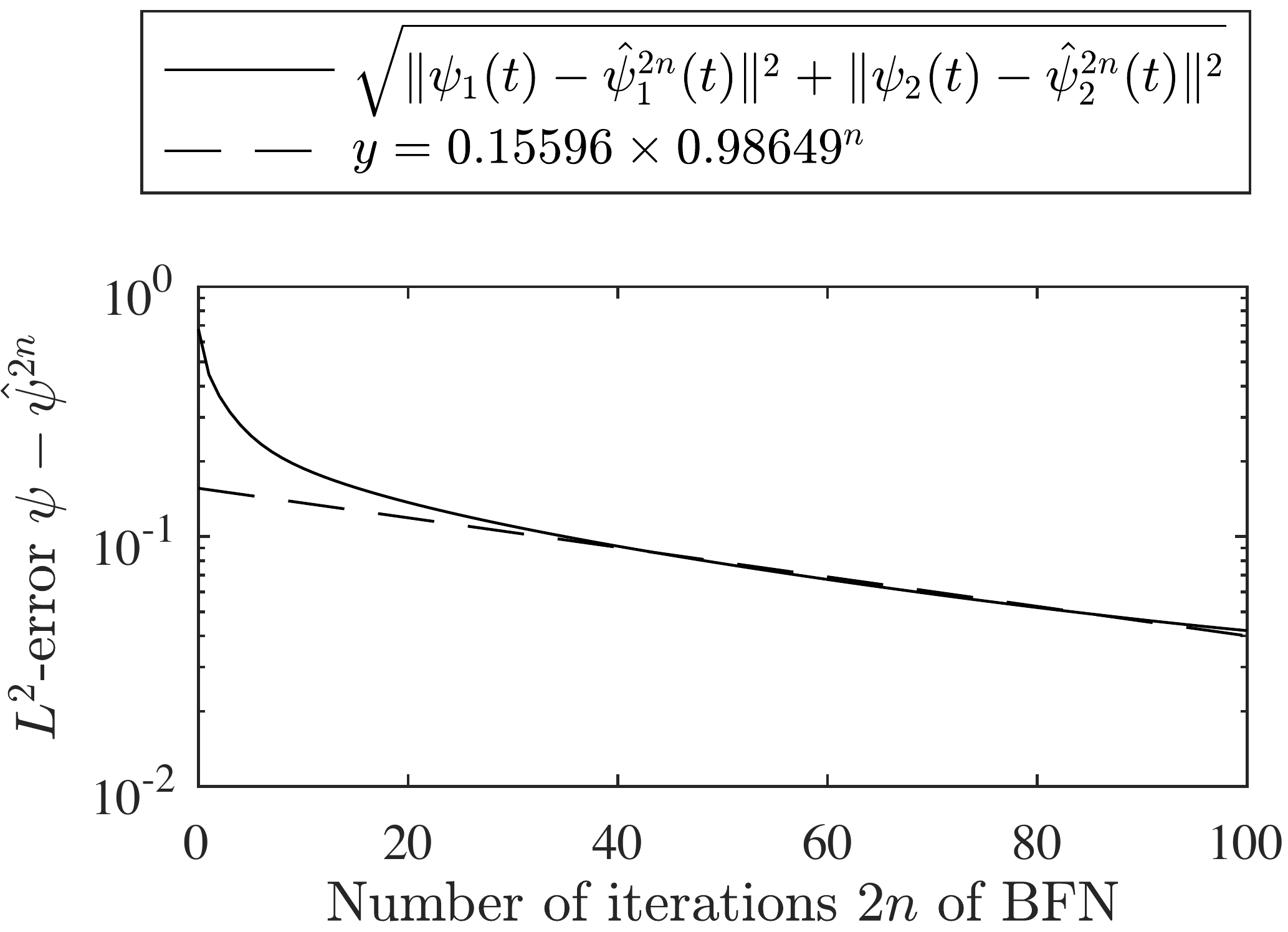}
    \caption{Evolution of the absolute error between the actual PSDs $\psi_1$ and $\psi_2$ at time $t = 1$h and the estimations $\hat\psi_1^{2n}$ and $\hat\psi_2^{2n}$ obtained by \eqref{obs2}-\eqref{obs2b}
    through iterations of the BFN algorithm.
    }
    \label{fig:bfn_err}
\end{figure}



\section{Conclusion}

In this paper, a new expression of the CLD associated to a PSD of spheroid-like particles has been found.
Using this model,
two inversion procedures were proposed.
In the single-shape case, we proved the injectivity of the PSD-to-CLD operator. 
With a direct inversion method based on a Tikhonov regularization procedure, we were able to recover the PSD from the CLD in numerical simulations.
In the multi-shape case, we relied on an evolution model of a batch crystallization process to propose a new strategy for the CLD-to-PSD problem based on the
BFN algorithm.
We proved the convergence of the observer when two shapes of crystals coexist in the reactor: spheres and elongated spheroids, and numerically implemented this method.
The promising results obtained on simple numerically simulated PSDs, although preliminary,
suggest possible applications of these methods
to experimental data, that we hope will benefit from this theoretical study.

\subsubsection*{Acknowledgments}
The authors would like to thank V. Andrieu, É. Chabanon, É. Gagnière, U. Serres for many fruitful discussions.\\
This research was partially funded by the French Grant ANR ODISSE (ANR-19-CE48-0004-01).

\appendix

\section{Observability analysis}\label{app}

In this appendix, we prove the observability result.
\thobs*

Since $\tmax$ can be chosen as small as desired (see remark \ref{rem}), we actually show that if $\psi_0\neq 0$, the set of times $t\in[0, \tmax]$ such that $\opc \psi (t) \neq0$ is dense in $[0, \tmax]$.
Moreover,
$\opc\psi = \opc_1\psi_1+\opc_2\psi_2$,
where
\fonction{\mathcal{K}_i}{L^2((\rmin, \rmax); \R)}{L^2((0, \lmax); \R)}{\dtc}{\left(\ell\mapsto
\int_{\rmin}^{\rmax}\noy_i(\ell, r)\dtc(r)\diff r
\right)}

Hence, proving \eqref{eq:obs} is equivalent to proving that if $\psi_1(t, \cdot) = \opc_2\psi_2(t, \cdot)$ for all $t\in[0, \tmax]$,
then $\psi_1(t, r) = \psi_2(t, r) = 0$ for all $t\in[0, \tmax]$ and all $r\in[\rmin, \rmax]$.
The proof relies on properties of the successive derivatives of $\opc_i\psi_i$.

\medskip

Let $\mathcal{F}:L^2((\rmin, \rmax); \R)\to \R^{\N^*}$ be the linear map such that
$$
\mathcal{F}_n(\dtc) =\left(\mathcal F(\dtc)\right)_n
=
\int_{\rmin}^{\rmax}
    \frac{\dtc(r)}{r^{2n}}
\diff r ,
\qquad 
\forall \dtc\in L^2((\rmin, \rmax); \R), n\in \N^*.
$$
For all $\ee>0$, recall the definitions of section \ref{sec:inj}:
$$
a_n(\ee)=
\int_{\phi=0}^{2\pi}\int_{\theta=0}^\pi
\alpha_\ee^{n}(\phi,\theta) \frac{\sin \theta}{4\pi} \diff \theta\diff \phi,\qquad
b_n = \frac{(2n)!}{(n!)^2(1-2n)4^{2n}}.
$$
Let $\mathcal{A}(\ee)$ and $\mathcal B$ be the linear endomorphisms on $\R^{\N^*}$  such that, for any $\left(u_n\right)_{n\in \N^*}$
$$
\left(\mathcal{A}(\ee) u\right)_n= a_n(\ee) u_n, \qquad \left(\mathcal{B}  u\right)_n= b_n u_n.
$$
Then
$$
\left((\mathcal{K}_i\dtc)^{(2n)}(0)\right)_{n\in \N^*}
=
    \mathcal{B} \mathcal{A}(\ee_i) \mathcal{F}\dtc.
$$

\begin{lemma}[Asymptotic properties of $(a_n)$]\label{L:seq_moments}
The sequence $\left(a_n(\ee)\right)_{n\in \N}$ is such that 
$$
\lim_{n\to\infty} \frac{a_{n+1}(\ee)}{a_n(\ee)}
=
\begin{cases}
1 & \text{ if }\ee\geq 1,
\\
\dfrac{1}{\ee^2} & \text{ if }\ee< 1.
\end{cases}
$$
Furthermore, $a_n(\eta)>\sqrt{\pi/n}$ and if $\eta>1$ then $a_n(\eta)\to 0$.
\end{lemma}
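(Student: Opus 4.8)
The plan is to reduce the whole statement to a pointwise analysis of $\alpha_\ee$ together with one soft functional‑analytic observation, avoiding any explicit Laplace asymptotics. First I would record the elementary structure of the integrand: since $a_n(\ee)=\int_{D}\alpha_\ee^{\,n}\,\diff\mu$ with $\diff\mu=\frac{\sin\theta}{4\pi}\diff\theta\,\diff\phi$ a probability measure on $D=[0,2\pi]\times[0,\pi]$, and $\alpha_\ee(\phi,\theta)=\sin^2\phi+\frac{\cos^2\phi}{g(\theta)}$ with $g(\theta)=\cos^2\theta+\ee^2\sin^2\theta$ ranging over $[\min(1,\ee^2),\max(1,\ee^2)]$, the value $\alpha_\ee(\phi,\theta)$ is a convex combination of $1$ and $1/g(\theta)$. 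This gives the pointwise bounds $\ee^{-2}\le\alpha_\ee\le1$ for $\ee\ge1$, with $\alpha_\ee=1$ exactly when $\cos\phi=0$ or $\sin\theta=0$, and $1\le\alpha_\ee\le\ee^{-2}$ for $\ee<1$, with $\alpha_\ee=\ee^{-2}$ only at $(\phi,\theta)\in\{0,\pi\}\times\{\pi/2\}$. By continuity of $\alpha_\ee$ the $\mu$-essential supremum is $M(\ee)=1$ if $\ee\ge1$ and $M(\ee)=\ee^{-2}$ if $\ee<1$, and in both cases $\mu(\{\alpha_\ee>M(\ee)-\delta\})>0$ for every $\delta>0$.

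For the ratio statement I would use log-convexity rather than an asymptotic expansion. Cauchy--Schwarz gives $a_n^2=\big(\int\alpha_\ee^{(n-1)/2}\alpha_\ee^{(n+1)/2}\,\diff\mu\big)^2\le a_{n-1}a_{n+1}$, so the ratios $r_n:=a_{n+1}/a_n$ (positive, since $\alpha_\ee>0$) are non-decreasing in $n$; and $\alpha_\ee\le M(\ee)$ $\mu$-a.e.\ gives $r_n\le M(\ee)$. Hence $r_n$ converges to some $L\le M(\ee)$, and since $a_n=a_1\prod_{k=1}^{n-1}r_k$, Cesàro averaging of $\log r_k$ shows $a_n^{1/n}\to L$. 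On the other hand the trivial two-sided bounds $a_n\le M(\ee)^n$ and $a_n\ge(M(\ee)-\delta)^n\,\mu(\{\alpha_\ee>M(\ee)-\delta\})$ give $a_n^{1/n}\to M(\ee)$. Therefore $L=M(\ee)$, which is exactly the claimed value ($1$ for $\ee\ge1$, $\ee^{-2}$ for $\ee<1$).

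For the last two assertions: when $\ee>1$, $\alpha_\ee\le1$ everywhere and $\alpha_\ee<1$ off the $\mu$-null set $\{\cos\phi=0\}\cup\{\sin\theta=0\}$, so $\alpha_\ee^{\,n}\to0$ $\mu$-a.e.\ and is dominated by $1\in L^1(\mu)$; dominated convergence yields $a_n(\ee)\to0$. For the lower bound I would start from $\alpha_\ee(\phi,\theta)\ge\sin^2\phi$ (valid for every $\ee>0$); the $\theta$-integration contributes the factor $\frac1{4\pi}\int_0^\pi\sin\theta\,\diff\theta=\frac1{2\pi}$ and leaves a Wallis integral, giving $a_n(\ee)\ge\frac1{2\pi}\int_0^{2\pi}\sin^{2n}\phi\,\diff\phi=\binom{2n}{n}4^{-n}$, a lower bound of order $n^{-1/2}$; sharpening it by localizing near the maximizer $\phi=\pi/2$ (using $\alpha_\ee\ge1-(\ee^2-1)\cos^2\phi$ and $\int_{-1}^1(1-t^2)^n\,\diff t\asymp\sqrt{\pi/n}$) produces the stated inequality.

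The only real subtlety is the ratio limit: for a general moment sequence $a_n^{1/n}$ converges while $a_{n+1}/a_n$ may oscillate, so the crux is the observation that log-convexity forces $(r_n)$ to be monotone, after which convergence is automatic and the limit is pinned down by the root test. The remaining ingredients — the convexity bounds on $\alpha_\ee$, the essential-supremum computation, and the dominated-convergence argument — are routine.
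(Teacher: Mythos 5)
Your treatment of the ratio limit and of $a_n(\ee)\to 0$ for $\ee>1$ is correct and is essentially the paper's argument in light disguise. The paper sandwiches $a_{n+1}/a_n$ between $(a_n)^{1/n}$ (via Jensen, $a_{n+1}=\mathbb{E}_\mu\big[(\alpha_\ee^n)^{1+1/n}\big]\ge a_n^{1+1/n}$) and $\|\alpha_\ee\|_\infty$, then concludes from $(a_n)^{1/n}=\|\alpha_\ee\|_{L^n(\mu)}\to\|\alpha_\ee\|_{L^\infty(\mu)}$; you instead obtain monotonicity of the ratios from log-convexity (Cauchy--Schwarz) and identify the limit through the root test. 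Both routes hinge on the same fact, $a_n^{1/n}\to\|\alpha_\ee\|_\infty$, and neither needs Laplace asymptotics, so I would not count this as a different proof. The dominated convergence argument for $a_n\to0$ is identical to the paper's.

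The genuine gap is the inequality $a_n(\eta)>\sqrt{\pi/n}$, and it cannot be repaired by your proposed localization. Your Wallis computation is the honest one: keeping the normalization of the probability measure, $\alpha_\ee\ge\sin^2\phi$ gives $a_n(\ee)\ge\frac{1}{2\pi}\int_0^{2\pi}\sin^{2n}\phi\,\diff\phi=\binom{2n}{n}4^{-n}\sim\frac{1}{\sqrt{\pi n}}$, which falls short of $\sqrt{\pi/n}$ by a factor of $\pi$. The sharpening does not close this gap: first, $1-(\ee^2-1)\cos^2\phi$ is negative on part of the domain once $\ee^2>2$, so its $n$-th power is not a valid minorant of $\alpha_\ee^n$ (the correct minorant is $1-(1-\ee^{-2})\cos^2\phi$); second, even with the correct constant $c=1-\ee^{-2}$, the Laplace contribution of the maximizers $\phi=\pi/2,\,3\pi/2$ is $\frac{1}{\sqrt{\pi n c}}(1+o(1))$, which exceeds $\sqrt{\pi/n}$ only when $c<1/\pi^2$, i.e.\ for $\ee$ extremely close to $1$. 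In fact the stated inequality is false as written: $a_1(\eta)=\mathbb{E}_\mu[\alpha_\eta]\le\|\alpha_\eta\|_\infty=1<\sqrt{\pi}$ for every $\eta\ge1$, and for fixed $n$ one has $a_n(\eta)\to\binom{2n}{n}4^{-n}<\sqrt{\pi/n}$ as $\eta\to\infty$. (The paper's own one-line proof overshoots for the same reason: it writes $a_n(\eta)\ge\int_0^{2\pi}\sin^{2n}\phi\,\diff\phi$, dropping the factor $\frac{1}{2\pi}$ carried by the probability measure $\mu$.) What is actually used downstream, in Proposition~\ref{P:border_obs}, is only that $\sqrt{n}\,a_n(\eta)$ is bounded below by a positive constant, and your bound $a_n(\ee)\ge\binom{2n}{n}4^{-n}\ge\frac{1}{2\sqrt{n}}$ delivers exactly that; the right move is therefore to weaken the claimed inequality to $a_n(\eta)\ge c/\sqrt{n}$ rather than to seek a sharper localization.
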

\begin{proof}
Recall that 
$
\alpha_\ee(\phi,\theta)=\dfrac{\cos ^2\phi}{\cos^2\theta+\ee^2\sin^2 \theta}+\sin ^2\phi
$.
Then
$$
\|\alpha_\ee\|_\infty=\max_{
    \substack{
        \phi\in[0,2\pi] 
        \\ 
        \theta\in[0,\pi] 
        }
    }
\alpha_\ee(\phi,\theta)
=
\begin{cases}
1 & \text{ if }\ee\geq 1,
\\
\dfrac{1}{\ee^2} & \text{ if }\ee< 1.
\end{cases}
$$
Recall that $\frac{\sin\theta}{4\pi}$ is the density of a probability measure $\mu$ on $(\phi,\theta)\in [0,2\pi]\times [0,\pi]$. If we denote by $\mathbb{E}_\mu$ the expected value with respect to $\mu$, we obtain
$a_n(\ee)=\mathbb{E}_\mu\left(\alpha_\ee^n\right)$.
Then
$$
a_{n+1}(\ee)=\mathbb{E}_\mu \left(\alpha_\ee^{n+1}\right)\leq \|\alpha_\ee\|_\infty\mathbb{E}_\mu \left(\alpha_\ee^{n}\right)=\|\alpha\|_\infty a_{n}(\ee).
$$
On the other hand, $a_{n+1}(\ee)=\mathbb{E}_\mu \left(\left(\alpha_\ee^{n}\right)^{\frac{n+1}{n}}\right)$.
Notice that the function $x\mapsto x^{\frac{n+1}{n}}=x^{1+\frac{1}{n}}$ is convex. Hence, Jensen's inequality implies
$$
a_{n+1}(\ee)
=
\mathbb{E}_\mu \left(\left(\alpha_\ee^{n}\right)^{\frac{n+1}{n}}\right)
\geq 
\left(\mathbb{E}_\mu \left(\alpha_\ee^{n}\right)\right)^{1+\frac{1}{n}}
=
\left(a_{n}(\ee)\right)^{1+\frac{1}{n}}
$$
Thus $\left(a_{n}(\ee)\right)^{\frac{1}{n}}
\leq 
\frac{a_{n+1}(\ee)}{a_{n}(\ee)}
\leq \|\alpha_\ee\|_\infty
$.
Since $\mu$ is a probability measure,
$$
\left(a_{n}(\ee)\right)^{\frac{1}{n}}
=
\left(\mathbb{E}_\mu \left(\alpha_\ee^{n}\right)\right)^{\frac{1}{n}}
=
\left\|\alpha_\ee\right\|_{L^n(\mu)}
\xrightarrow[n \to \infty]{} 
\left\|\alpha_\ee\right\|_{L^\infty(\mu)}
=\left\|\alpha_\ee\right\|_{\infty},
$$
which concludes the proof of the first stated limit.

Regarding the supplementary asymptotic information, we first have 
naturally
$$
a_n(\eta)
\geq
\int_0^{2\pi}\sin^{2n}\phi \diff \phi 
=
2\pi \frac{(2n)!}{2^{2n}(n!)^2}
\sim 
2\sqrt{\frac{\pi}{n}}.
$$
The last limit  stated  is a consequence of Lebesgue's dominated convergence theorem, since 
$\alpha_\ee^{n}(\phi,\theta)\xrightarrow[n\to\infty]{}0$
for all $(\theta,\phi)$ such that $\phi\neq k\pi$, $k\in \Z$, (in which case $\alpha_\ee^{n}(\phi,\theta)=1$), and $0\leq \alpha_\ee^{n}(\phi,\theta)\leq 1$.
\end{proof}

Regarding the map $\mathcal{F}$, we have the following lemma.

\begin{lemma}\label{L:psi0_neq0}
Let $\dtc$ be continuous and such that $\dtc(\rmin)\neq 0$. Then
$$
\int_{\rmin}^{\rmax}\frac{\dtc(r)}{r^{2n}}\diff r\sim \frac{\dtc(\rmin)}{2n\rmin^{2n-1}}.
$$
\end{lemma}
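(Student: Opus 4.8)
The plan is to analyze the asymptotic behavior of the integral $\int_{\rmin}^{\rmax} \frac{\dtc(r)}{r^{2n}}\diff r$ as $n\to\infty$. The key observation is that the weight $r^{-2n}$ concentrates mass near the left endpoint $r = \rmin$, since $\rmin$ is the smallest value of $r$ on the interval and hence $r^{-2n}$ is maximized there, decaying exponentially (relative to its peak) as $r$ moves away from $\rmin$. So the integral is a Laplace-type integral dominated by a neighborhood of $\rmin$, and to leading order $\dtc(r)$ can be replaced by its value $\dtc(\rmin)$ near that endpoint.

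Concretely, I would first write $r^{-2n} = e^{-2n\ln r}$ and substitute $r = \rmin + s$, so the integral becomes $\int_0^{\rmax-\rmin} \dtc(\rmin+s)\, e^{-2n\ln(\rmin+s)}\diff s$. Factoring out the peak value $\rmin^{-2n}$, write $e^{-2n\ln(\rmin+s)} = \rmin^{-2n} e^{-2n[\ln(\rmin+s) - \ln\rmin]}$ and note $\ln(\rmin+s) - \ln\rmin = \frac{s}{\rmin} + O(s^2)$ near $s=0$. Then the standard Laplace/Watson's lemma argument applies: split the integral into $s\in[0,\eps]$ and $s\in[\eps,\rmax-\rmin]$; on the tail the integrand is bounded by $\rmin^{-2n}$ times $e^{-c n}$ for some $c>0$ (by continuity of $\dtc$, which is bounded on the compact interval), so that part is negligible compared to the main term; on $[0,\eps]$ use continuity of $\dtc$ at $\rmin$ to replace $\dtc(\rmin+s)$ by $\dtc(\rmin)(1+o(1))$, and evaluate $\int_0^\infty e^{-2ns/\rmin}\diff s = \frac{\rmin}{2n}$, the correction from the quadratic term in the exponent and from truncating at $\eps$ being of lower order. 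Combining, $\int_{\rmin}^{\rmax}\frac{\dtc(r)}{r^{2n}}\diff r \sim \dtc(\rmin)\,\rmin^{-2n}\cdot\frac{\rmin}{2n} = \frac{\dtc(\rmin)}{2n\,\rmin^{2n-1}}$, as claimed.

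The main obstacle — really the only subtlety — is justifying the interchange and the error control rigorously with only continuity of $\dtc$ (rather than, say, $C^1$ or analyticity): one must be careful that the $o(1)$ from continuity at $\rmin$ genuinely does not interfere with the sharp asymptotic constant. This is handled by the usual $\eps$-$\delta$ bracketing: for any $\eta>0$ choose $\eps$ so that $|\dtc(\rmin+s)-\dtc(\rmin)|<\eta$ for $s\in[0,\eps]$, bound the $[0,\eps]$ contribution above and below by $(\dtc(\rmin)\pm\eta)\int_0^\eps e^{-2n\ln(\rmin+s)}\diff s$, show $\int_0^\eps e^{-2n\ln(\rmin+s)}\diff s \sim \frac{\rmin^{-2n+1}}{2n}$ (again Laplace), and let $n\to\infty$ then $\eta\to 0$. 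Since the statement only claims an equivalence (not an asymptotic expansion), this level of care suffices. Note $\dtc(\rmin)\neq 0$ is exactly what is needed for the equivalence "$\sim$" to be meaningful; if $\dtc(\rmin)=0$ the integral would merely be $o(\rmin^{-2n}/n)$.
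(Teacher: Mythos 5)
Your proof is correct and, once the Laplace-method framing is stripped away, it is essentially the same argument as the paper's: both split the integral at a point $\rmin+\eps$ chosen by continuity so that $\dtc$ is within $\eta$ of $\dtc(\rmin)$, evaluate $\int_{\rmin}^{\rmin+\eps} r^{-2n}\diff r$ exactly (giving $\sim \rmin^{1-2n}/(2n)$), and show the tail over $[\rmin+\eps,\rmax]$ is exponentially smaller, then let $n\to\infty$ followed by $\eta\to 0$. The only cosmetic difference is that the paper works directly with the antiderivative of $r^{-2n}$ rather than writing $r^{-2n}=e^{-2n\ln r}$, which avoids any discussion of linearizing the exponent.
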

\begin{proof}
Without loss of generality, we can assume that $\dtc(\rmin)>0$. Let $\mu\in (0,1)$, then by continuity of $\dtc$, there exists $R\in (\rmin,\rmax]$ such that for all $r\in [\rmin,R)$, $\dtc(r)\in  (\dtc(\rmin)(1-\mu),\dtc(\rmin)(1+\mu) )$.
Then
$$
\int_{\rmin}^{\rmax}\frac{\dtc(r)}{r^{2n}}\diff r
=\int_{\rmin}^{R}\frac{\dtc(r)}{r^{2n}}\diff r
+
\int_{R}^{\rmax}\frac{\dtc(r)}{r^{2n}}\diff r
$$
$$
\left|
    \int_{R}^{\rmax}\frac{\dtc(r)}{r^{2n}}\diff r
\right|
\leq 
 \frac{1}{2n-1}\frac{\left\|\dtc\right\|_\infty}{R^{2n-1}}.
$$
On the other hand, 
$$
\left|
    \int_{\rmin}^{R}\frac{\dtc(r)}{r^{2n}}\diff r-\frac{\dtc(\rmin)}{2n\rmin^{2n-1}}
\right|
\leq 
\frac{\dtc(\rmin)}{2n(2n-1)\rmin^{2n-1}}
+
\frac{\mu\dtc(\rmin)}{(2n-1)\rmin^{2n-1}}
+
\frac{\dtc(\rmin)(1+\mu)}{(2n-1)R^{2n-1}}.
$$
As a consequence,
$$
\begin{aligned}
\left|
\frac{2n\rmin^{2n+1}}{\dtc(\rmin)} \;
    \int_{\rmin}^{\rmax}\frac{\dtc(r)}{r^{2n}}\diff r-1
\right|
\leq 
&
\mu\frac{2n}{2n-1}
+
\frac{1}{ 2n-1 }
+
\frac{ (1+\mu)2n}{(2n-1)}\left(\frac{\rmin}{R}\right)^{2n-1}
\\
&+
\frac{2n}{2n-1}\frac{\left\|\dtc\right\|_\infty}{\dtc(\rmin)}
\left(\frac{\rmin}{\rmax}\right)^{2n-1}.
\end{aligned}
$$
The right-hand side has limit $\mu$ for any $\mu$ (independently of the value of $R$, which is always larger than $\rmin$), hence the left hand side has limit $0$.
\end{proof}

By integration by parts, we can obtain a corollary.
\begin{corollary}\label{C:dpsi0_neq0}

Let $\dtc$ be continuously differentiable and such that $\dtc(\rmin)=\dtc(\rmax)= 0$,$\dtc'(\rmin)\neq 0$. Then
$$
\int_{\rmin}^{\rmax}\frac{\dtc(r)}{r^{2n}}\diff r\sim \frac{\dtc'(\rmin)}{4n^2\rmin^{2n-1}}.
$$
\end{corollary}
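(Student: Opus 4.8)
The statement is an asymptotic equivalence for the moment integral $\int_{\rmin}^{\rmax} r^{-2n}\dtc(r)\diff r$ under the hypotheses $\dtc\in C^1$, $\dtc(\rmin)=\dtc(\rmax)=0$, and $\dtc'(\rmin)\neq 0$. Since the corollary claims to follow from Lemma~\ref{L:psi0_neq0} "by integration by parts", the natural strategy is to integrate by parts once so as to transfer a factor of $1/r$ from the integrand onto an antiderivative that vanishes at $\rmin$, thereby manufacturing a first-order zero at the lower endpoint and reducing to a situation where Lemma~\ref{L:psi0_neq0} applies directly.

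\textbf{Key steps.} First I would write $\frac{1}{r^{2n}} = \frac{-1}{2n-1}\frac{\diff}{\diff r}\!\left(\frac{1}{r^{2n-1}}\right)$ and integrate by parts:
\begin{equation*}
\int_{\rmin}^{\rmax}\frac{\dtc(r)}{r^{2n}}\diff r
=
\left[\frac{-1}{2n-1}\frac{\dtc(r)}{r^{2n-1}}\right]_{\rmin}^{\rmax}
+
\frac{1}{2n-1}\int_{\rmin}^{\rmax}\frac{\dtc'(r)}{r^{2n-1}}\diff r.
\end{equation*}
The boundary term vanishes because $\dtc(\rmin)=\dtc(\rmax)=0$, leaving
\begin{equation*}
\int_{\rmin}^{\rmax}\frac{\dtc(r)}{r^{2n}}\diff r
=
\frac{1}{2n-1}\int_{\rmin}^{\rmax}\frac{\dtc'(r)}{r^{2n-1}}\diff r.
\end{equation*}
The remaining integral has an odd exponent $2n-1$ rather than the even $2n$ covered by Lemma~\ref{L:psi0_neq0}, so the second step is to align the exponents. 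One clean way is to write $\frac{1}{r^{2n-1}} = \frac{r}{r^{2n}}$, so the integral becomes $\int_{\rmin}^{\rmax} \frac{r\dtc'(r)}{r^{2n}}\diff r$, and apply Lemma~\ref{L:psi0_neq0} to the continuous function $g(r) := r\dtc'(r)$, which satisfies $g(\rmin)=\rmin\,\dtc'(\rmin)\neq 0$. This yields
\begin{equation*}
\int_{\rmin}^{\rmax}\frac{\dtc'(r)}{r^{2n-1}}\diff r
=
\int_{\rmin}^{\rmax}\frac{g(r)}{r^{2n}}\diff r
\sim
\frac{g(\rmin)}{2n\,\rmin^{2n-1}}
=
\frac{\rmin\,\dtc'(\rmin)}{2n\,\rmin^{2n-1}}
=
\frac{\dtc'(\rmin)}{2n\,\rmin^{2n-2}}.
\end{equation*}
Combining this with the prefactor $\frac{1}{2n-1}$ and using $2n-1\sim 2n$, I obtain
\begin{equation*}
\int_{\rmin}^{\rmax}\frac{\dtc(r)}{r^{2n}}\diff r
\sim
\frac{1}{2n}\cdot\frac{\dtc'(\rmin)}{2n\,\rmin^{2n-2}}
=
\frac{\dtc'(\rmin)}{4n^2\,\rmin^{2n-2}},
\end{equation*}
which matches the claimed equivalent up to verifying the exponent of $\rmin$ in the statement.

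\textbf{Main obstacle.} The analysis is almost entirely routine once the integration by parts is set up, and the only real subtlety is bookkeeping: I must apply Lemma~\ref{L:psi0_neq0} to $g(r)=r\dtc'(r)$ rather than to $\dtc'$ directly, since the lemma is stated for the even exponent $2n$, and I must track the power of $\rmin$ carefully. My computation produces $\rmin^{2n-2}$ in the denominator, whereas the corollary as printed reads $\rmin^{2n-1}$; I expect that discrepancy to be a typographical slip in the statement (the factor $\rmin$ coming from $g(\rmin)=\rmin\,\dtc'(\rmin)$ is what lowers the exponent by one), and I would flag it rather than force the algebra. The one genuine hypothesis to honor is that $\dtc'$ must be continuous so that $g$ is continuous and Lemma~\ref{L:psi0_neq0} applies; this is guaranteed by $\dtc\in C^1$. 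No uniformity or interchange-of-limit issue arises, so beyond this endpoint-power accounting the proof is immediate.
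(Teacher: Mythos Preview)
Your argument is correct and is precisely the route the paper intends: integrate by parts once (the boundary terms vanish since $\dtc(\rmin)=\dtc(\rmax)=0$) and then invoke Lemma~\ref{L:psi0_neq0} on the resulting integrand. Your observation about the exponent is also right: the computation genuinely yields $\rmin^{2n-2}$ in the denominator, so the printed $\rmin^{2n-1}$ in the corollary is a typographical slip (harmless for the subsequent arguments, which only compare rates).
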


These last two results allow to prove the following statement.

\begin{proposition}\label{P:border_obs}
There are no solutions  $\dtc_1,\dtc_2$ to
$
\mathcal{F}(\dtc_1)
=
\mathcal{A}(\eta)\mathcal{F}(\dtc_2)
$
(with $\psi_i(\rmax)=0$)
such that 
$
\dtc_1(\rmin),\partial_r\dtc_1(\rmin),\dtc_2(\rmin),\partial_r\dtc_2(\rmin)
$
are not all equal to $0$.

\end{proposition}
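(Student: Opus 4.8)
The plan is to argue by contradiction, comparing the exact decay rate of $n\mapsto\mathcal F_n(\dtc)$ coming from Lemma~\ref{L:psi0_neq0} and Corollary~\ref{C:dpsi0_neq0} with the two-sided control $\sqrt{\pi/n}<a_n(\eta)\le 1$, $a_n(\eta)\to0$ provided by Lemma~\ref{L:seq_moments}. Throughout, $\eta>1$ (the only case relevant to Theorem~\ref{th:obs}; for $\eta=1$ one has $\mathcal A(1)=\mathrm{Id}$ and the statement is false), the relation $\mathcal F(\dtc_1)=\mathcal A(\eta)\mathcal F(\dtc_2)$ reads $\mathcal F_n(\dtc_1)=a_n(\eta)\,\mathcal F_n(\dtc_2)$ for every $n\ge1$, and $\dtc_1,\dtc_2\in H^2(\rmin,\rmax)\subset C^1([\rmin,\rmax])$ with $\dtc_1(\rmax)=\dtc_2(\rmax)=0$. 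Suppose, for contradiction, that not all of $\dtc_1(\rmin),\partial_r\dtc_1(\rmin),\dtc_2(\rmin),\partial_r\dtc_2(\rmin)$ vanish.

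I would classify each $\dtc_i$ by its behaviour at $\rmin$. If $\dtc_i(\rmin)\ne0$ (\emph{type 1}), Lemma~\ref{L:psi0_neq0} gives $|\mathcal F_n(\dtc_i)|=\Theta\!\big(n^{-1}\rmin^{1-2n}\big)$; if $\dtc_i(\rmin)=0\ne\partial_r\dtc_i(\rmin)$ (\emph{type 2}), Corollary~\ref{C:dpsi0_neq0} gives $|\mathcal F_n(\dtc_i)|=\Theta\!\big(n^{-2}\rmin^{1-2n}\big)$; if $\dtc_i(\rmin)=\partial_r\dtc_i(\rmin)=0$ (\emph{type 3}), I claim $\mathcal F_n(\dtc_i)=o\!\big(n^{-5/2}\rmin^{1-2n}\big)$. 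The standing hypothesis forbids the pair $(\dtc_1,\dtc_2)$ from being of type $(3,3)$, and the eight remaining type-pairs are eliminated as follows. For $(1,1)$ and $(2,2)$ the two sides of the identity have the same order, so $a_n(\eta)=\mathcal F_n(\dtc_1)/\mathcal F_n(\dtc_2)$ tends to a nonzero constant, contradicting $a_n(\eta)\to0$. For $(1,2)$, $(1,3)$, $(2,3)$ one has $|\mathcal F_n(\dtc_1)|$ of strictly larger order than $|\mathcal F_n(\dtc_2)|$ for $n$ large, contradicting $|\mathcal F_n(\dtc_1)|=a_n(\eta)|\mathcal F_n(\dtc_2)|\le|\mathcal F_n(\dtc_2)|$. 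For $(2,1)$, $(3,1)$, $(3,2)$ the denominator $\mathcal F_n(\dtc_2)$ is nonzero for $n$ large and $a_n(\eta)=|\mathcal F_n(\dtc_1)|/|\mathcal F_n(\dtc_2)|=o(n^{-1/2})$, contradicting $a_n(\eta)>\sqrt{\pi/n}$. (It is only for $(3,2)$ that the sharp type-3 bound is really needed: the cheaper $\mathcal F_n(\dtc_1)=o(n^{-2}\rmin^{1-2n})$ would there yield only $a_n(\eta)=o(1)$, which is compatible with $a_n(\eta)\to0$.)

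What remains, and is the crux of the argument, is the type-3 estimate: for $\dtc\in H^2(\rmin,\rmax)$ with $\dtc(\rmin)=\partial_r\dtc(\rmin)=\dtc(\rmax)=0$, one has $\mathcal F_n(\dtc)=\int_{\rmin}^{\rmax}\dtc(r)r^{-2n}\diff r=o\!\big(n^{-5/2}\rmin^{1-2n}\big)$. I would obtain it by two integrations by parts. The first, using $\dtc(\rmin)=\dtc(\rmax)=0$, gives $\mathcal F_n(\dtc)=\tfrac1{2n-1}\int_{\rmin}^{\rmax}\dtc'(r)r^{1-2n}\diff r$; the second, using $\dtc'(\rmin)=0$, gives $\int_{\rmin}^{\rmax}\dtc'(r)r^{1-2n}\diff r=\tfrac1{2n-2}\int_{\rmin}^{\rmax}\dtc''(r)r^{2-2n}\diff r$ up to a boundary term of order $\rmax^{2-2n}$, which is exponentially smaller than $\rmin^{1-2n}$. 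Finally, splitting the remaining integral into $\int_{\rmin}^{\rmin+\delta}$ and $\int_{\rmin+\delta}^{\rmax}$, Cauchy--Schwarz bounds the first piece by $\big(\int_{\rmin}^{\rmin+\delta}|\dtc''|^2\big)^{1/2}\,\rmin^{5/2-2n}/\sqrt{4n-5}$ and the second by an exponentially smaller quantity; since $\int_{\rmin}^{\rmin+\delta}|\dtc''|^2\to0$ as $\delta\to0$, this yields $\int_{\rmin}^{\rmax}\dtc''(r)r^{2-2n}\diff r=o(\rmin^{5/2-2n}/\sqrt n)$, whence $\mathcal F_n(\dtc)=o(\rmin^{5/2-2n}/n^{5/2})=o(\rmin^{1-2n}/n^{5/2})$. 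The main obstacle throughout is that this bound must be $o$ and not merely $O$ — the contradiction in case $(3,2)$ is quantitatively tight — and it is exactly here that the $H^2$ regularity of the PSDs enters, $\dtc''\in L^2$ producing via Cauchy--Schwarz the extra factor $n^{-1/2}$ over the naive estimate.
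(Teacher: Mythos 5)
Your proof is correct and follows the same strategy as the paper: compare the decay rates of $\rmin^{2n-1}\mathcal{F}_n(\dtc_1)$ and $a_n(\eta)\,\rmin^{2n-1}\mathcal{F}_n(\dtc_2)$ furnished by Lemma~\ref{L:psi0_neq0} and Corollary~\ref{C:dpsi0_neq0} against the two-sided control $\sqrt{\pi/n}<a_n(\eta)\le 1$, $a_n(\eta)\to 0$ from Lemma~\ref{L:seq_moments}. The one place where you go beyond the paper is worth highlighting: the paper disposes of the derivative level with ``the same argument repeated on the derivatives,'' but, as you correctly observe, the sub-case $\dtc_1(\rmin)=\partial_r\dtc_1(\rmin)=0$, $\partial_r\dtc_2(\rmin)\neq 0$ is quantitatively tight --- there the right-hand side is of order $a_n(\eta)\,n^{-2}\gtrsim n^{-5/2}$, so the crude bound $\mathcal{F}_n(\dtc_1)=O(n^{-2}\rmin^{1-2n})$ obtained from two integrations by parts, and even the Cauchy--Schwarz bound $O(n^{-5/2}\rmin^{1-2n})$, do not suffice; one genuinely needs the little-$o$ refinement. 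Your splitting of $\int\dtc_1''\,r^{2-2n}\diff r$ near $\rmin$, exploiting the absolute continuity of $\int|\dtc_1''|^2$ to upgrade $O(n^{-5/2}\rmin^{1-2n})$ to $o(n^{-5/2}\rmin^{1-2n})$, supplies exactly the missing estimate, and it makes explicit why the $H^2$ regularity assumed in Proposition~\ref{propfinale} is used here and not merely in Proposition~\ref{P:second_der_obs}. The remaining eight type-pairs are handled exactly as in the paper, and your observation that the statement must implicitly assume $\eta>1$ (otherwise $\mathcal{A}(1)=\mathrm{Id}$ and the claim fails) is consistent with the hypotheses of Theorem~\ref{th:obs}.
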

\begin{proof}
According to Lemmas~\ref{L:seq_moments}-\ref{L:psi0_neq0} and Corollary~\ref{C:dpsi0_neq0},
if $\dtc_1(\rmin)\neq 0$, then 
$
\rmin^{2n-1}\mathcal{F}_n(\dtc_i)$ converges to $0$ slower than
$
\rmin^{2n-1}
a_n(\eta)
\mathcal{F}_n(\dtc_2)$, since $a_n(\eta)\to 0$.
On the other hand, if $\dtc_1(\rmin)= 0$ then having $\dtc_2(\rmin)\neq 0$ implies that
$a_n(\eta)
\rmin^{2n-1}\mathcal{F}_n(\dtc_2)$ now converges slower than $\rmin^{2n-1}\mathcal{F}_n(\dtc_1)$ since $a_n(\eta)\geq 2\sqrt{\pi/n}$. Hence this implies that we must also have $\dtc_2(\rmin)=0$.
The same argument repeated on the derivatives yields the statement.
\end{proof}

In this first case, observability is proved by a sort of injectivity argument, the images of  $\mathcal{K}_1$ and $\mathcal{K}_2$ are such that their intersection cannot be reached through functions $\dtc$ that do not vanish at $\rmin$.

\begin{proposition}\label{P:second_der_obs}
Assume $\eta_1=1$  and $ \eta_2=\eta>1$. If $\dtc_1, \dtc_2$ are two non-zero solutions of the transport equation such that for some $\tau\in[0,\tmax]$, $\dtc_1(\tau),\dtc_2(\tau)\in H^2_0(\rmin,\rmax)$, then there exists no $\varepsilon>0$ such that 
$$
\mathcal{K}_1(\dtc_1(t))= \mathcal{K}_2(\dtc_2(t))
\qquad
\forall t\in (\tau-\varepsilon,\tau+\varepsilon)\cap [0,\tmax].
$$
\end{proposition}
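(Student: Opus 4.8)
The plan is to convert the identity $\mathcal{K}_1(\psi_1(t))=\mathcal{K}_2(\psi_2(t))$ into a countable family of moment identities, differentiate them in time along the characteristics of the transport equation, and then use the asymptotics of $(a_n(\eta))_n$ to force $\psi_1(\tau)=\psi_2(\tau)=0$, contradicting that the two solutions are non-zero. First I would proceed exactly as in Section~\ref{sec:inj}: for $\ell\in(0,2\rmin)$ the maps $\ell\mapsto\mathcal{K}_i(\psi_i(t))(\ell)$ are real-analytic, their odd-order derivatives at $\ell=0$ vanish, and $(\mathcal{K}_i\psi)^{(2n)}(0)=a_n(\eta_i)b_n\,\mathcal{F}_n(\psi)$. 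Since $\eta_1=1$ gives $\alpha_1\equiv1$, hence $a_n(1)=1$, and $b_n\neq0$, matching Taylor coefficients at $\ell=0$ yields
$$\mathcal{F}_n(\psi_1(t))=a_n(\eta)\,\mathcal{F}_n(\psi_2(t)),\qquad\forall n\ge1,\ \forall t\in(\tau-\eps,\tau+\eps)\cap[0,\tmax].$$

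Next I would exploit the constant-ratio hypothesis: writing $G_i(t)=g_i h(t)$ with $h:=G_1/g_1=G_2/g_2>0$, the method of characteristics gives $\psi_i(t,r)=\tilde w_i(r+g_i s(t))$ for $r\in[\rmin,\rmax]$, where $s(t):=\int_t^\tau h(\sigma)\diff\sigma$, $w_i:=\psi_i(\tau,\cdot)\in H^2_0(\rmin,\rmax)$ and $\tilde w_i$ is its extension by $0$; characteristics that would leave the domain through $r=\rmax$ carry the value $0$ because of \eqref{eq:rmax}, which makes this formula valid at least for $t\le\tau$ with $\tau-t$ small (the only case needed for Theorem~\ref{th:obs}). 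Substituting $\rho=r+g_i s(t)$ and discarding the remainder near $\rmin$ — which is $O(s^{5/2})=o(s^2)$ because $w_i(\rmin)=w_i'(\rmin)=0$ and $w_i''\in L^2$ — I would get $\mathcal{F}_n(\psi_i(t))=\Psi^{(n)}_i(s(t))+o(s(t)^2)$, with $\Psi^{(n)}_i(s):=\int_{\rmin}^{\rmax}w_i(\rho)(\rho-g_i s)^{-2n}\diff\rho$ real-analytic at $s=0$ and $\left.\frac{\diff^k}{\diff s^k}\Psi^{(n)}_i\right|_{s=0}=\frac{(2n+k-1)!}{(2n-1)!}\,g_i^k\,\tilde{\mathcal{F}}_{2n+k}(w_i)$, where $\tilde{\mathcal{F}}_m(w):=\int_{\rmin}^{\rmax}w(\rho)\rho^{-m}\diff\rho$ (so $\mathcal{F}_n=\tilde{\mathcal{F}}_{2n}$). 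Since $s(\cdot)$ is a diffeomorphism near $\tau$, the analytic function $\Psi^{(n)}_1-a_n(\eta)\Psi^{(n)}_2$ is $o(s^2)$ as $s\to0^+$, so its derivatives at $s=0$ of orders $0,1,2$ must vanish; cancelling the common factor $\frac{(2n+k-1)!}{(2n-1)!}$ this reads
$$g_1^k\,\tilde{\mathcal{F}}_{2n+k}(w_1)=a_n(\eta)\,g_2^k\,\tilde{\mathcal{F}}_{2n+k}(w_2),\qquad\forall n\ge1,\ k\in\{0,1,2\}.$$

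Then I would eliminate between the identity for $(n{+}1,k{=}0)$, namely $\tilde{\mathcal{F}}_{2n+2}(w_1)=a_{n+1}(\eta)\tilde{\mathcal{F}}_{2n+2}(w_2)$, and the one for $(n,k{=}2)$, obtaining $\big(g_1^2a_{n+1}(\eta)-g_2^2a_n(\eta)\big)\tilde{\mathcal{F}}_{2n+2}(w_2)=0$ for every $n\ge1$. By Lemma~\ref{L:seq_moments} one has $a_{n+1}(\eta)/a_n(\eta)\to1$, and moreover $a_{n+1}(\eta)<a_n(\eta)$ for all $n$ since $0<\alpha_\eta<1$ holds $\mu$-a.e. when $\eta>1$ (so $\mathbb{E}_\mu(\alpha_\eta^n(1-\alpha_\eta))>0$). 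Hence $g_1^2a_{n+1}(\eta)\neq g_2^2a_n(\eta)$ for all large $n$: if $g_1=g_2$ this is the strict monotonicity of $(a_n(\eta))_n$, and if $g_1\neq g_2$ it holds eventually since the ratio tends to $1\neq g_2^2/g_1^2$. Therefore $\mathcal{F}_{n+1}(w_2)=\tilde{\mathcal{F}}_{2n+2}(w_2)=0$ for all large $n$, so $w_2=0$ by totality in $L^2((\rmin,\rmax);\R)$ of the tail family $\big(r\mapsto r^{-2m}\big)_{m\ge m_0}$ (the Müntz-type argument of \cite[Section~6]{brivadis:hal-02529820}, using $\rmin>0$); the identities with $k=0$ then give $\mathcal{F}_n(w_1)=0$ for all $n\ge1$, so $w_1=0$ as well. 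This contradicts the hypothesis that $\psi_1,\psi_2$ are non-zero, so no such $\eps>0$ exists.

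The step I expect to be the real obstacle is the second one: rigorously justifying the characteristic representation up to the boundary $r=\rmax$ and controlling the remainder near $\rmin$, so that the first three $s$-derivatives at $0$ of $\mathcal{F}_n(\psi_i(t))$ are exactly (multiples of) $\tilde{\mathcal{F}}_{2n}(w_i)$, $\tilde{\mathcal{F}}_{2n+1}(w_i)$ and $\tilde{\mathcal{F}}_{2n+2}(w_i)$. This is where the $H^2_0$ regularity enters essentially — with one fewer order of vanishing at $\rmin$ the remainder would only be $O(s^2)$ and would spoil the order-$2$ identity. Once these identities are available, the elimination and the appeal to Lemma~\ref{L:seq_moments} are short.
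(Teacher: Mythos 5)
Your proof is correct and follows essentially the same route as the paper's: both reduce the hypothesis to the moment identities $\mathcal{F}_n(\psi_1(t)) = a_n(\eta)\,\mathcal{F}_n(\psi_2(t))$, extract a second, order-two-in-time identity carrying the factor $g_i^2\,2n(2n+1)\mathcal{F}_{n+1}(w_i)$ (you via the characteristic representation and the second Taylor coefficient of $s\mapsto(\rho-g_i s)^{-2n}$, the paper via differentiating the PDE twice in time and integrating $\psi_i''/r^{2n}$ by parts using $H^2_0$ --- the identical computation), and then conclude from Lemma~\ref{L:seq_moments} that $g_1^2a_{n+1}(\eta)-g_2^2a_n(\eta)$ is eventually non-zero, forcing the tail moments and hence $w_2$, then $w_1$, to vanish. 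The only substantive difference is your explicit $O(s^{5/2})$ control of the boundary remainder (which the paper leaves implicit), at the price of working only with $t\le\tau$, hence $\tau>0$ --- harmless, since Proposition~\ref{propfinale} only ever needs interior times.
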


\begin{proof}
By iterated integration by parts, for any $\dtc\in H^2_0(\rmin,\rmax)$
$$
\int_{\rmin}^{\rmax}
        \frac{\dtc''(r)}{r^{2n}}
\diff r
=
\left[
\frac{\dtc'(r)}{r^{2n}}
\right]_{\rmin}^{\rmax}
-
\left[2n
\frac{\dtc(r)}{r^{2n+1}}
\right]_{\rmin}^{\rmax}
+
2n(2n+1)
\int_{\rmin}^{\rmax}
        \frac{\dtc(r)}{r^{2n+2}}
\diff r.
$$
Hence for both $\dtc_i$, $i\in\{1,2\}$, at $t=\tau$,
$$
\mathcal{F}_n(\psi_i''(\tau))=2n(2n+1)\mathcal{F}_{n+1}(\psi_i(\tau)).
$$

We prove the result by contradiction. Assume there exists $\varepsilon>0$ such that 
$$
\mathcal{K}_1(\dtc_1(t))=\mathcal{K}_2(\dtc_2(t)), \qquad \forall t\in(\tau-\varepsilon,\tau+\varepsilon)\cap[0,\tmax],
$$
implies that 
\begin{equation}\label{E:seq_eq2}
    \mathcal{B} \mathcal{A}(\ee_1) \mathcal{F}\dtc_1(t)
    =
    \mathcal{B} \mathcal{A}(\ee_2) \mathcal{F}\dtc_2(t),
    \qquad \forall t\in (\tau-\varepsilon,\tau+\varepsilon)\cap[0,\tmax],
\end{equation}
and, term-wise,
$$
\mathcal{F}_n(\dtc_2(t))=\frac{a_n(\ee_1)}{a_n(\ee_2)}\mathcal{F}_n(\dtc_1(t)),
\qquad \forall t\in (\tau-\varepsilon,\tau+\varepsilon)\cap[0,\tmax],\forall n\in \N^*.
$$

On the other hand, equation~\eqref{E:seq_eq2} can be differentiated with respect to time. With 
$$
\frac{g_i}{G_i(t)}\frac{\partial}{\partial t}\frac{g_i}{G_i(t)}\frac{\partial}{\partial t} \dtc_i(t,r)=g_i^2\frac{\partial^2\dtc_i}{\partial r^2} (t,r)
\qquad \forall t\in [0,\tmax]
$$
hence, by the assumption that $g_1/G_1(t)=g_2/G_2(t)$,
$$
g_1^2 \mathcal{K}_1\left(\frac{\partial^2\dtc_1}{\partial r^2} (t)\right)
=
g_2^2 \mathcal{K}_2\left(\frac{\partial^2\dtc_2}{\partial r^2} (t)\right),
\qquad \forall t\in (\tau-\varepsilon,\tau+\varepsilon)\cap[0,\tmax].
$$
Likewise, this implies 
\begin{equation}\label{E:der_seq_eq2}
    g_1^2\mathcal{B} \mathcal{A}(\ee_1) \mathcal{F}\frac{\partial^2\dtc_1}{\partial r^2}(t)
    =
    g_2^2\mathcal{B} \mathcal{A}(\ee_2) \mathcal{F}\frac{\partial^2\dtc_2}{\partial r^2}(t),
    \qquad \forall t\in (\tau-\varepsilon,\tau+\varepsilon)\cap[0,\tmax],
\end{equation}
and, term-wise, 
$$
\mathcal{F}_n\left(\frac{\partial^2\dtc_2}{\partial r^2}(t)\right) = \frac{g_1^2}{g_2^2} \frac{a_n(\ee_1)}{ a_n(\ee_2)}\mathcal{F}_n\left(\frac{\partial^2\dtc_2}{\partial r^2}(t)\right)_n,
\qquad \forall t\in (\tau-\varepsilon,\tau+\varepsilon)\cap[0,\tmax],\forall n\in \N^*.
$$

Since equations~\eqref{E:seq_eq2}-\eqref{E:der_seq_eq2} hold, we have both
$$
\begin{aligned}
\mathcal{F}_n(\psi_1''(\tau))=2n(2n+1)\mathcal{F}_{n+1}(\psi_1(\tau)),
\\
\frac{g_1^2}{g_2^2}\frac{a_n(\eta_1)}{a_{n+1}(\eta_1)}\frac{a_{n+1}(\eta_2)}{a_{n}(\eta_2)}\mathcal{F}_n(\psi_1''(\tau))=2n(2n+1)\mathcal{F}_{n+1}(\psi_i(\tau)).
\end{aligned}
$$

If there isn't an infinity of non-zero terms in $\mathcal{F}\psi_1$, the function $\psi_1$ must be equal to zero since the family $(r\mapsto 1/r^{2n})_{n\in \N^*}$ is total. Assuming $\psi_1\neq 0$, then there is an infinity of non zero terms and, up to an extraction $(n_k)_{k\in \N^*}$ such that $\mathcal{F}_{n_k}(\psi_1)\neq 0$ for all $k\in \N^*$, and
\begin{equation}\label{E:quotient}
    \frac{g_1^2}{g_2^2}\frac{a_{n_k}(\eta_1)}{a_{n_k+1}(\eta_1)}\frac{a_{n_k+1}(\eta_2)}{a_{n_k}(\eta_2)}
    =
    1.
\end{equation}
If $\eta\geq 1$, $\frac{a_{n}(\eta)}{a_{n+1}(\eta)}\to 1$, hence \eqref{E:quotient} is is leading to an incoherent limit except in the case $g_1^2=g_2^2$.
However, since $a_{n}(1)=1$ and $\frac{a_{n+1}(\eta_2)}{a_{n}(\eta_2)}>1$, \eqref{E:quotient} cannot be satisfied termwise if   $g_1^2=g_2^2$.
\end{proof}

\begin{proposition}\label{propfinale}
Assume $\eta_1=1$  and $ \eta_2=\eta>1$. Let $\dtc_1, \dtc_2$ be two non-zero $H^2(\rmin,\rmax)$ solutions of their respective transport equations such that
$$
\dtc_i(\rmax,t)=0,\qquad \forall t\in [0,\tmax],i\in \{1,2\}.
$$
Then the set of times $t\in [0,\tmax] $ such that
$$
\mathcal{K}_1(\dtc_1(t))\neq \mathcal{K}_2(\dtc_2(t))
$$
is dense in $[0,\tmax]$.
\end{proposition}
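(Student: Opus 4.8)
The plan is to argue by contradiction, reducing the functional identity $\opc_1(\dtc_1(t))=\opc_2(\dtc_2(t))$ (at a fixed time $t$) to a moment identity, and then invoking Propositions~\ref{P:border_obs} and~\ref{P:second_der_obs} in succession.

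First I would record the reduction. For each fixed $t$, the function $\ell\mapsto(\opc_i\dtc_i(t))(\ell)=\int_{\rmin}^{\rmax}\noy_i(\ell,r)\dtc_i(t,r)\diff r$ is analytic in a neighbourhood of $\ell=0$, since the kernel $\noy_i(\cdot,r)$ is analytic there uniformly in $r\in[\rmin,\rmax]$ (the integrand $\sqrt{1-(\ell/2r)^2\alpha_{\eta_i}(\phi,\theta)}$ being so), exactly as in the proof of the injectivity theorem of Section~\ref{sec:inj}. Using the identity $\big((\opc_i\dtc)^{(2n)}(0)\big)_{n\in\N^*}=\mathcal{B}\mathcal{A}(\eta_i)\mathcal{F}\dtc$ recorded above, together with $b_n\neq0$, $a_n(\eta_i)>0$ and $a_n(1)=1$, the equality $\opc_1(\dtc_1(t))=\opc_2(\dtc_2(t))$ forces equality of all Taylor coefficients at $\ell=0$, hence $\mathcal{F}(\dtc_1(t))=\mathcal{A}(\eta)\mathcal{F}(\dtc_2(t))$, \ie $\mathcal{F}_n(\dtc_1(t))=a_n(\eta)\mathcal{F}_n(\dtc_2(t))$ for every $n\in\N^*$.

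Now suppose, for contradiction, that $\{t\in[0,\tmax]:\opc_1(\dtc_1(t))\neq\opc_2(\dtc_2(t))\}$ is not dense in $[0,\tmax]$. Then its complement $B=\{t\in[0,\tmax]:\opc_1(\dtc_1(t))=\opc_2(\dtc_2(t))\}$ contains a non-empty relatively open subset of $[0,\tmax]$, hence a set $(\tau-\varepsilon,\tau+\varepsilon)\cap[0,\tmax]$ for some $\tau\in[0,\tmax]$ and some $\varepsilon>0$. For every $t$ in this set, the reduction above yields $\mathcal{F}(\dtc_1(t))=\mathcal{A}(\eta)\mathcal{F}(\dtc_2(t))$, and together with the standing hypothesis $\dtc_i(t,\rmax)=0$ this is precisely the configuration excluded by Proposition~\ref{P:border_obs}; therefore $\dtc_1(t,\rmin)=\partial_r\dtc_1(t,\rmin)=\dtc_2(t,\rmin)=\partial_r\dtc_2(t,\rmin)=0$. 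In particular, at $t=\tau$, both $\dtc_1(\tau,\cdot)$ and $\dtc_2(\tau,\cdot)$ vanish at $\rmin$ and at $\rmax$, so $\dtc_i(\tau,\cdot)\in H^2_0(\rmin,\rmax)$. Since $\dtc_1$ and $\dtc_2$ are non-zero solutions of their respective transport equations (and thus $\dtc_i(\tau,\cdot)\not\equiv0$, the transport flow being invertible), Proposition~\ref{P:second_der_obs} applies at $\tau$ and asserts that there is \emph{no} $\varepsilon>0$ with $\opc_1(\dtc_1(t))=\opc_2(\dtc_2(t))$ for all $t\in(\tau-\varepsilon,\tau+\varepsilon)\cap[0,\tmax]$ --- contradicting the choice of $\tau$ and $\varepsilon$. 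Hence $B$ has empty interior and the set in the statement is dense in $[0,\tmax]$.

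The substance of the argument is entirely carried by Propositions~\ref{P:border_obs} and~\ref{P:second_der_obs}; within this proof the only point demanding a little care is the reduction step, namely that the equality of the two $L^2$ outputs may legitimately be differentiated term by term at $\ell=0$ to produce the moment identity --- which rests on the analyticity at $\ell=0$ already exploited in Section~\ref{sec:inj} and on $H^2(\rmin,\rmax)\hookrightarrow C^1([\rmin,\rmax])$ to make sense of the boundary derivatives. The remaining bookkeeping (tracking which boundary values vanish, inheritance of non-vanishing through the transport flow) is routine.
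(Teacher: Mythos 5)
Your proof is correct and follows essentially the same route as the paper: the same reduction of $\mathcal{K}_1\dtc_1(t)=\mathcal{K}_2\dtc_2(t)$ to the moment identity $\mathcal{F}(\dtc_1(t))=\mathcal{A}(\eta)\mathcal{F}(\dtc_2(t))$, then Proposition~\ref{P:border_obs} to force vanishing boundary data at $\rmin$ and Proposition~\ref{P:second_der_obs} to rule out equality on an interval. The only difference is presentational --- you argue by contradiction on a putative interval of equality, while the paper performs the same two-case analysis directly at each time $t$ --- and your version is if anything slightly more explicit about the reduction step and about why $\dtc_i(\tau,\cdot)\in H^2_0(\rmin,\rmax)$.
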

\begin{proof}
Pick a time $t\in [0,\tmax]$. On the one hand, if $\dtc_i(\rmin,t)\neq 0$, or $\partial_r\dtc_i(\rmin,t)\neq 0$ for $i=1$ or $i=2$, then Proposition~\ref{P:border_obs} applies to prove that 
$
\mathcal{K}_1(\dtc_1(t))\neq \mathcal{K}_2(\dtc_2(t)).
$
On the other hand, if $\psi_1(t),\psi_2(t)\in H^2_0(\rmin,\rmax)$, then Proposition~\ref{P:second_der_obs} applies to prove that if $t$ is such that 
$
\mathcal{K}_1(\dtc_1(t))= \mathcal{K}_2(\dtc_2(t))
$
then any open interval containing $t$ must also contain a time $t'$ for which 
$
\mathcal{K}_{1}(\dtc_1(t'))\neq  \mathcal{K}_2(\dtc_2(t')).
$
This proves the statement.
\end{proof}

Proposition \ref{propfinale} implies Theorem~\ref{th:obs}, which concludes the observability analysis.

\bibliographystyle{abbrv}

\bibliography{references}

\end{document}